\newcommand\eq[1] {(\ref{#1})}
\newtheorem{remark}{Remark}[section]
\newtheorem{theorem}{Theorem}[section]
\newcommand{\x}{\mathbf{x}}
\newcommand{\ga}{\mathbf{g}^{(\alpha)}}
\newcommand{\E}{\mathbf{E}}
\newcommand{\J}{\mathbf{J}}
\newcommand{\ha}{\mathbf{h}^{(\alpha)}}
\newcommand{\ca}{c^{(\alpha)}}
\newcommand{\bfm}[1]{\mbox{\boldmath ${#1}$}}
\newcommand{\nonum}{\nonumber \\}
\newcommand{\beqa}{\begin{eqnarray}}
\newcommand{\eeqa}[1]{\label{#1}\end{eqnarray}}
\newcommand{\beq}{\begin{equation}}
\newcommand{\eeq}[1]{\label{#1}\end{equation}}
\newcommand{\Grad}{\nabla}
\newcommand{\Div}{\nabla \cdot}
\newcommand{\Curl}{\nabla \times}
\newcommand{\Real}{\mathop{\rm Re}\nolimits}
\newcommand{\Imag}{\mathop{\rm Im}\nolimits}
\newcommand{\Tr}{\mathop{\rm Tr}\nolimits}
\newcommand{\lang}{\langle}
\newcommand{\rang}{\rangle}
\newcommand{\Md}{\partial}
\newcommand{\Ga}{\alpha}
\newcommand{\Gb}{\beta}
\newcommand{\Gd}{\delta}
\newcommand{\Ge}{\epsilon}
\newcommand{\Gg}{\gamma}
\newcommand{\Gc}{\chi}
\newcommand{\Gk}{\kappa}
\newcommand{\Gl}{\lambda}
\newcommand{\Gm}{\mu}
\newcommand{\Gs}{\sigma}
\newcommand{\Gj}{\tau}
\newcommand{\Go}{\omega}
\newcommand{\GO}{\Omega}
\newcommand{\BGe}{\bfm\epsilon}
\newcommand{\BGve}{\bfm\varepsilon}
\newcommand{\BGj}{\bfm\tau}
\def\Be{{\bf e}}
\def\Bj{{\bf j}}
\def\Bn{{\bf n}}
\def\Bu{{\bf u}}
\def\Bx{{\bf x}}
\def\BC{{\bf C}}
\def\BE{{\bf E}}
\def\BI{{\bf I}}
\def\BJ{{\bf J}}
\def\BR{{\bf R}}
\def\BS{{\bf S}}
\def \ba {\begin{array}}
\def \ea {\end{array}}
\newcommand{\xione}{\xi^{(1)}}
\newcommand{\xitwo}{\xi^{(2)}}
\newcommand{\etaone}{\eta^{(1)}}
\newcommand{\etatwo}{\eta^{(2)}}
\newcommand{\psione}{\psi^{(1)}}
\newcommand{\psitwo}{\psi^{(2)}}
\newcommand{\Eoo}{\BE_1^{(1)}}
\newcommand{\Eot}{\BE_1^{(2)}}
\newcommand{\Eto}{\BE_2^{(1)}}
\newcommand{\Ett}{\BE_2^{(2)}}
\newcommand{\n}{\mathbf{n}}
\newcommand{\aE}{\langle\E\rangle}
\newcommand{\aJ}{\langle\J\rangle}
\newcommand{\lla}{\left\langle}
\newcommand{\rra}{\right\rangle}
\begin{document}
%%%%%%%%%%%%%%%%%%%%%%%%%%%%%%%%%%%%%%%%%%%%%%%%%%%%%%%%%%%%%%%%%%%%%%%%%
\vspace{-1in}
\title{Criteria for guaranteed breakdown in two-phase inhomogeneous bodies}
\author{Patrick Bardsley, Michael S. Primrose, Michael Zhao, Jonathan Boyle, \\ Nathan Briggs, Zoe Koch,
and Graeme W. Milton\\
\small{Department of Mathematics, University of Utah, Salt Lake City UT 84112, USA}}
\date{}
\maketitle
\begin{abstract}
Lower bounds are obtained on the maximum field strength in one or both phases in a body containing two-phases. 
These bounds only incorporate boundary data that can be obtained from measurements at the surface of the body, 
and thus may be useful for determining if breakdown has necessarily occurred in one of the phases, or that some other
nonlinearities have occurred. It is assumed the response of the phases is linear up to the point of electric, dielectric,
or elastic breakdown, or up to the point of the onset of nonlinearities.
These bounds are calculated for conductivity, with one or two sets of boundary conditions,
for complex conductivity (as appropriate at fixed frequency when the wavelength is much larger than the body, i.e., 
for quasistatics), and for two-dimensional elasticity. Sometimes the bounds are optimal when the field
is constant in one of the phases, and using the algorithm of Kang, Kim, and Milton (2012) a wide variety of inclusion shapes
having this property, for appropriately chosen bodies and appropriate boundary conditions, are numerically constructed.
Such inclusions are known as $E_\Omega$-inclusions. 
\end{abstract}
\vskip2mm

%\noindent Keywords:
%\newline
\vskip 5mm

\section{Introduction}
\setcounter{equation}{0}
%%%%%%%%%%%%%%%%%%%%%%%%%%%%%%%%%%%%%%%%%%%%%%%%%%%%%%%%%%%%%%%%%%%%%%%%%%
An inverse problem of obviously major practical significance is the detection of cracks inside a body using measurements at the boundary of the body. If the body in the absence
of the crack is a homogeneous material, such as a metal, the calculation of the fields inside the body is a straightforward numerical problem and in this way
cracks can be detected. But, for example, with the advent of aeroplanes built from carbon fibre composite materials it is becoming increasingly important to detect
cracks in composites, or more generally in inhomogeneous bodies. Ideally one would like to solve the inverse problem of locating the position of a crack in an inhomogeneous 
body with an unknown configuration of the phases in the body,
but at the very least one would like to be able to identify those boundary fields that necessarily imply there is a crack in the body, or that some other
breakdown in the equations has occured inside the body.  It is the purpose of this paper to identify such boundary fields. 
While many of the arguments are elementary and 
while it seems highly likely that the results presented here can be improved, the paper is perhaps the first to embark on this significant problem and has the goal
of introducing the inverse problem community to it, so that further progress can be made. Another important, but related, detection problem is in 
breast cancer, where the breast is again an inhomogeneous body, perhaps modelled a two phase medium, where the phases are the glandular tissue (containing the milk-producing 
cells) and adipose tissue (fatty cells). In this context large interior fields, or a breakdown in the two-phase equations, could signal breast cancer.

A material often breaks down if the local field exceeds a certain critical value. This may be the current field strength which causes melting in a conducting material, 
the electric field strength which causes dielectric breakdown in an insulating material, or the value of the stress field which causes plastic yielding or cracking
in an elastic material. Usually one wants to 
avoid this and so it is helpful to have some idea of the maximum field within a body $\GO$ from measurements of the
(voltage, current flux) or (displacement, traction) at the boundary $\Md\GO$ of the body. If the body is homogeneous then we may
numerically solve for the fields in the interior and thus calculate explicitly the maximum field. However if the body is 
inhomogeneous, say containing two phases in an unknown geometry as illustrated in figure 1, then we cannot do this but still we would like to say 
something rigorous about the field inside. As we are making no assumptions about the geometry there could be sharp corners
or other singularities in the surface between phases inside the body, and these will lead to infinite local fields in the absence of breakdown or nonlinearities. Thus 
all we can hope for are lower bounds on the magnitude of the maximum local field, where the maximum is taken over 
one or both phases. Thus we want to identify boundary data which are certainly dangerous in the sense that they necessarily imply that breakdown has occurred inside the body, or that some other nonlinearities must have occurred. We assume that the response of each phase is linear up to the point of
breakdown, or up to the point of onset of nonlinearities.  

\begin{figure}
	\centering
	\includegraphics[width=0.50\textwidth]{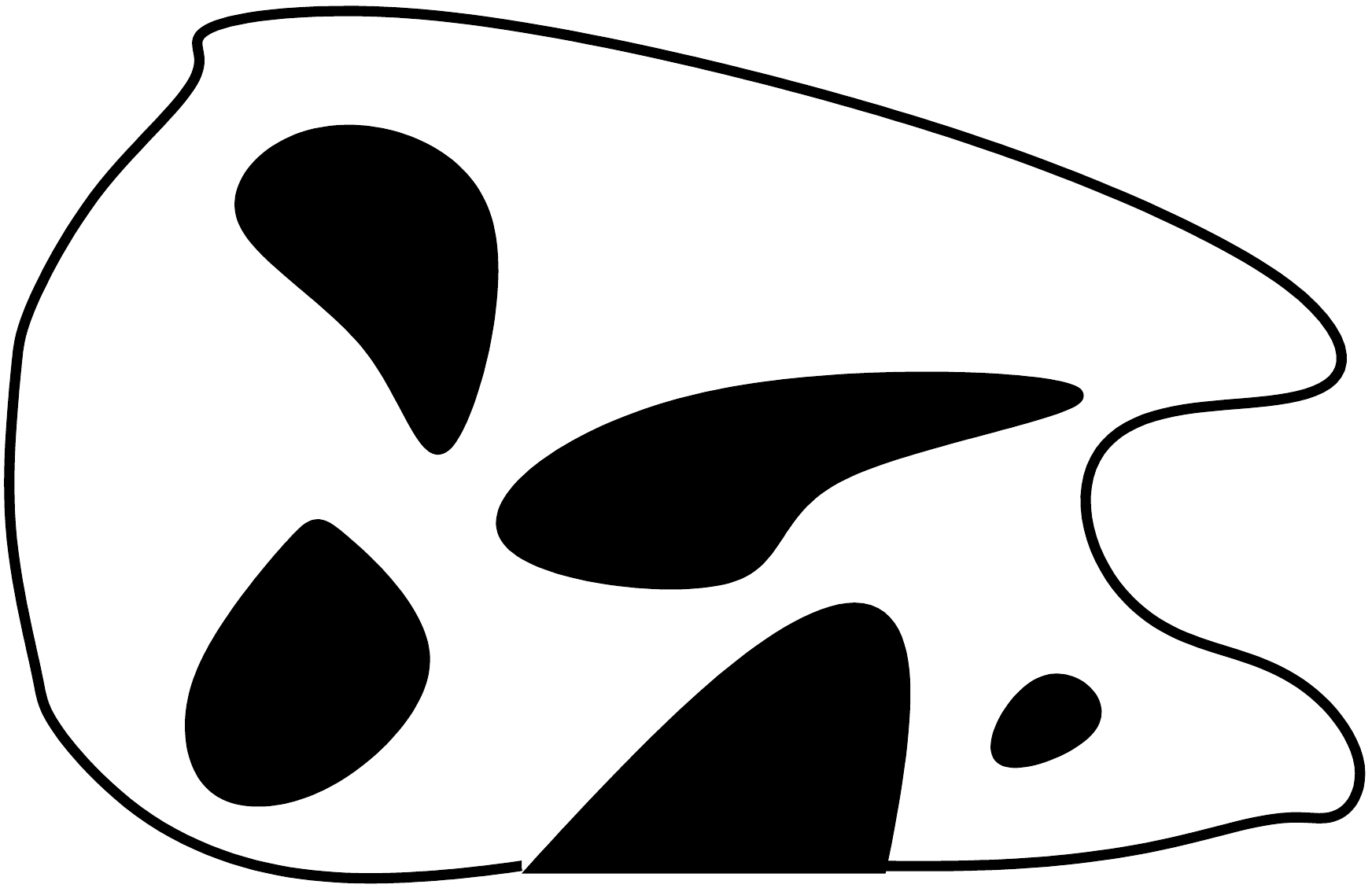}
	\caption{The body $\GO$ containing phase 1, in black, occupying the region $\GO_1$ and phase 2, in white, 
        occupying the region $\GO_2$.}
	\label{fig:Omega}
\end{figure}

Similar questions have been addressed before in the context of periodic or statistically homogeneous composite materials of infinite extent: rigorous
bounds have been obtained on the effective yield surface of polycrystalline materials \cite{Bishop:1951:TPD, Kohn:1999:SMP, Nesi:2000:IBY, Goldsztein:2001:RPP, Garroni:2003:STD}, on the set of recoverable strains of polycrystalline shape memory materials \cite{Bhattacharya:1997:EEM}, and on the lowest value of the maximum field magnitude (or maximum of some norm of the field, for matrix valued fields) within two phase linear composites 
\cite{Lipton:2004:OLB, Lipton:2005:OLB, Lipton:2006:OLB, He:2007:LBS, Alali:2009:OLB, Chen:2010:OLB, He:2010:LSS, Alali:2012:NBL, Liu:2014:GIM}. Also results have 
been obtained on the lowest value of the maximum of some norm of the field for one or more inclusions in an infinite body when uniform fields are imposed at infinity \cite{Wheeler:2004:IMS, Liu:2014:GIM}. To our knowledge such bounds have not been obtained for a two-phase body $\GO$ of finite extent with general boundary conditions at $\Md\GO$ and it is the purpose of this paper to address this. 

While the bounds we obtain are very crude (because we bound the average in each phase of the square of fields, by the square of the maximum field) we believe 
they are the first rigorous inequalities addressing this problem, and as such should serve as a benchmark for future progress. Furthermore, they  are sharp for certain geometries. In particular, many of the bounds are sharp when the field in one phase is constant. Numerous examples have been found of periodic or statistically homogeneous two-phase composites having the property that the field is constant in one phase
\cite{Maxwell:1954:TEM, Hashin:1962:EMH, Milton:1980:BCD, Milton:1981:BCP, Tartar:1985:EFC, Lurie:1986:EEC, Norris:1985:DSE, Milton:1986:MPC, Francfort:1986:HOB, Grabovsky:1995:MMEa, Vigdergauz:1986:EEP, Vigdergauz:1994:TDG, Grabovsky:1995:MMEb, Vigdergauz:1996:RLE, Vigdergauz:1999:EMI, Vigdergauz:1999:CES, Sigmund:2000:NCE, Benveniste:2003:NER, Liu:2007:PIM}. Also sets of inclusions in an infinite matrix have been found such that the field in them is uniform when a uniform field is applied at infinity
\cite{Cherepanov:1974:IPP, Kang:2008:IPS, Liu:2008:SEC, Liu:2014:GIM, Dai:2015:USF}. Liu, James and Leo \cite{Liu:2007:PIM} call these inclusions $E$-inclusions.
For a single inclusion in a matrix with a uniform field at infinity the field is uniform when the inclusion is an ellipsoid 
\cite{Poisson:1826:SMS, Maxwell:1954:TEMb, Eshelby:1957:DEF, Eshelby:1961:EII, Khachaturyan:1966:SQC, Willis:1981:VRM} and it was conjectured by Eshelby 
\cite{Eshelby:1957:DEF, Eshelby:1961:EII} that this is the only simply connected inclusion with this property. Eshelby's conjecture was proved for planar elasticity
in \cite{Sendeckyj:1970:EIP}, for two-dimensional conductivity or equivalently antiplane elasticity in \cite{Ru:1996:EIA}, for three-dimensional conductivity
and elasticity when the uniformity property holds for all uniform applied fields in \cite{Kang:2008:SPS,Liu:2008:SEC}, and in three-dimensional elasticity 
when it holds for two independent uniform applied fields in \cite{Ammari:2010:PSE}. On the other hand Liu \cite{Liu:2008:SEC} has shown that for three-dimensional conductivity
with a single uniform applied field there are nonellipsoidal inclusions which have a uniform field inside.

Inclusions with a uniform field inside retain this property if we truncate the material to a body $\GO$ of finite extent and apply appropriate boundary conditions.
However given a body $\GO$ there could exist a wider class of inclusions called $E_\GO$-inclusions contained within $\GO$ for which the field is uniform for some
boundary condition: $E$-inclusions lying inside $\GO$ are $E_\GO$-inclusions, but the converse is not true. For two-dimensional conductivity simply connected 
$E_\GO$-inclusions were constructed by Kang, Kim and Milton \cite{Kang:2011:SBV}. Here we show that these inclusions remain $E_\GO$-inclusions under appropriate affine transformations, and that they are also $E_\GO$-inclusions for elasticity with appropriate boundary conditions.

\section{Real conductivity with one boundary condition}
\setcounter{equation}{0}
In this section, we consider the equations of real conductivity in the body $\GO$ in the absence of source terms:
\beq \BJ(\Bx)=\Gs(\Bx)\BE(\Bx),\quad \Div\BJ=0,\quad \BE=-\Grad V, \eeq{0.1}
in which $\BJ$ is the current field, $\BE$ is the electric field, $V$ is the potential, and 
\beq \Gs(\Bx)=\Gs^{(1)}\Gc_1(\Bx)+\Gs^{(2)}\Gc_2(\Bx) \eeq{0.1a}
is the (scalar valued) local conductivity, where $\Gs^{(1)}$ and $\Gs^{(2)}$ are the scalar conductivities of phases 1 and 2, respectively,
and $\Gc_i(\Bx)$ is the characteristic function of phase $i$ taking the value $1$ in phase i, and zero outside it 
(thus $\Gc_1(\Bx)+\Gc_2(\Bx)=1$ within the body). Breakdown at a given point
$\Bx$ in phase $\Ga=1,2$  is assumed to depend only the local electric field $\BE(\Bx)$ at that point. As the phases are isotropic 
it should only depend on the magnitude $|\BE(\Bx)|$. Thus the local criterion for breakdown in phase $\Ga$ at point $\Bx$ is that
\beq |\BE(\Bx)|\geq c^{(\Ga)}, \eeq{0.1b}
and conversely if $|\BE(\Bx)|< c^{(\Ga)}$ we will say the material has not broken at the point $\Bx$. From boundary measurements we can
determine the potential $V$ and the current flux $\BJ\cdot\Bn$ at the boundary $\Md\GO$. We seek criteria which enable us to say with 
certainty that the boundary measurements imply  breakdown has occurred somewhere inside the body (assuming the linear equations \eq{0.1} hold
up to the point of breakdown). 

\subsection{In which phase does breakdown first occur?}
In the two-dimensional situation, we have the following result.

\begin{theorem}
	In two-dimensions $|\BE|$ takes its maximum value over a given phase on the boundary of that phase, which may be at the interface between phases, or on the boundary of the
body $\GO$.
\end{theorem}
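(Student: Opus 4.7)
The plan is to exploit the classical 2D fact that the gradient of a harmonic function is the real form of a holomorphic function, then invoke the maximum modulus principle. Within each phase the local conductivity $\Gs$ is a constant scalar, so the current-conservation law $\Div(\Gs \Grad V)=0$ from \eq{0.1} reduces to Laplace's equation $\GD V = 0$. Hence $V$ is harmonic in the interior of $\GO_\alpha$ for each $\alpha=1,2$.

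Next I would introduce, on each connected component of $\GO_\alpha$, the complex-valued function
\[
\phi(z) \;=\; \frac{\partial V}{\partial x} - \ii\,\frac{\partial V}{\partial y}, \qquad z=x+\ii y.
\]
A direct check of the Cauchy--Riemann equations for $u=V_x$, $v=-V_y$ reduces to $V_{xx}+V_{yy}=0$ (which holds by harmonicity) and the trivially true identity $V_{xy}=V_{xy}$, so $\phi$ is holomorphic in the interior of $\GO_\alpha$. Moreover
\[
|\phi(z)|^2 \;=\; V_x^2+V_y^2 \;=\; |\Grad V|^2 \;=\; |\BE(\Bx)|^2,
\]
so $|\BE|$ is literally the modulus of a holomorphic function on each connected open piece of each phase. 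Applying the maximum modulus principle on each such component, $|\BE|$ attains its supremum on the topological boundary of the component; that boundary is a subset of $\Md\GO_\alpha$, which decomposes into the interphase interface $\Md\GO_1\cap\Md\GO_2$ and the portion of $\Md\GO$ touched by phase $\alpha$. Taking the maximum over all components yields the claim.

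The main obstacle is a regularity/technical one rather than a conceptual one: making sure $\phi$ is genuinely holomorphic up to having a well-defined supremum. One must justify that $V$ is $C^2$ in the open interior of each phase (standard elliptic interior regularity, since $\Gs$ is piecewise constant and each phase is open), and that "boundary of the phase" is interpreted generously enough to include the interface, since $V$ need not be smooth across it. The 2D restriction is essential and non-removable: in higher dimensions the gradient of a harmonic function has no analogous holomorphic structure, and a maximum principle for $|\Grad V|$ simply fails, which is why the theorem is stated only for $d=2$. No further ingredients are needed beyond the maximum modulus principle and the observation that the holomorphic-gradient trick survives on each open connected subregion regardless of whether $\GO_\alpha$ is simply connected, because $\phi$ is defined intrinsically by the derivatives of $V$ and not via a (possibly multi-valued) harmonic conjugate.
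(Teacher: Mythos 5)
Your proof is correct and follows essentially the same route as the paper: both arguments express $|\BE|$ as the modulus of a holomorphic function on each connected component of a phase (your $\phi=V_x-\ii V_y$ is exactly the paper's $f=\Md g/\Md z$) and then invoke the maximum modulus principle. Your variant of defining $\phi$ intrinsically from the derivatives of $V$, rather than via the harmonic conjugate $W$, is a minor but genuine tidying-up, since it avoids the paper's side remark about the absence of net charge needed to make $W$ single-valued on multiply connected components.
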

\begin{proof}
	In two dimensions, the conductivity equations take the form
\beq    \E = \left[
	\begin{array}{ cc }
	-\dfrac{\Md V}{\Md x}, & -\dfrac{\Md V}{\Md y} 
	\end{array} \right], \quad \BJ=\Gs\BE,\quad \nabla \cdot \textbf{J} = \frac{\Md\J_1}{\Md x}+\frac{\Md\J_2}{\Md y}=0.
\eeq{1.1}
	Let us set $z = x + iy$. Now $V$ is harmonic in each connected part of one phase and there is no net charge inside the connected
part if it is multiply connected. So within this connected part $V$
is the real part of some analytic function $g = V + iW$, and we have
\beq \E = \Real \left[
	\begin{array}{ cc }
	-\dfrac{\Md g}{\Md x}, & -\dfrac{\Md g}{\Md y} 
	\end{array} \right] = \Real \left[
	\begin{array}{ cc }
	-\dfrac{\Md g}{\Md z}\dfrac{\Md z}{\Md x}, & -\dfrac{\Md g}{\Md z}\dfrac{\Md z}{\Md y}
	\end{array} \right] = \Real \left[
	\begin{array}{ cc }
	-\dfrac{\Md g}{\Md z}, & -i\dfrac{\Md g}{\Md z}
	\end{array} \right].
\eeq{1.2}
	Then if we let $f(z) = \Md g/\Md z$, $\textbf{E} = (-\Real f, \Imag f)$. But $f$ is analytic, so by the maximum modulus principle $|f|$ takes its maximum on the boundary of this connected part of the phase. On the other hand, $|\BE| = |f|$, so the maximum value of $\BE$ occurs on the boundary between the two phases, or at the boundary of $\GO$
\end{proof}
With this, it follows that breakdown must occur at the boundary $\Md\GO$ or at the interface between the phases. The following theorem gives conditions under which we can know whether the breakdown occurs first in phase 1 or first in phase 2. 
\begin{theorem}
	In two-dimensions, electrical breakdown definitely occurs in phase 1 first, if it does not first occur at the boundary $\Md\GO$, if
\beq  (c^{(2)})^2>(c^{(1)})^2\max\{(\sigma^{(1)}/\sigma^{(2)})^2, 1\}, \eeq{1.10}
and definitely occurs in phase 2 first, if it does not first occur at the boundary $\Md\GO$, if
\beq  (c^{(2)})^2<(c^{(1)})^2\min\{(\sigma^{(1)}/\sigma^{(2)})^2, 1\}. \eeq{1.11}
\end{theorem}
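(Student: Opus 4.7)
The plan is to combine Theorem 1.1 with the standard interface jump conditions for \eq{0.1} in order to compare the global maxima of $|\BE|$ in the two phases. By Theorem 1.1, the quantities $\tilde M_\Ga := \sup_{\GO_\Ga}|\BE|$ are attained on the boundaries of $\GO_\Ga$. Under the standing hypothesis that breakdown does not first occur on $\Md\GO$, both maxima must be attained on the interface $\GG=\overline{\GO}_1\cap\overline{\GO}_2$ rather than on $\Md\GO$.

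Next I would record the classical interface conditions coming from \eq{0.1}: the tangential component $E_t$ of $\BE$ and the normal component $\Gs E_n$ of $\BJ$ are continuous across $\GG$, so the one-sided normal components are related by $E_n^{(2)}=rE_n^{(1)}$ with $r=\Gs^{(1)}/\Gs^{(2)}$. This gives, at every interface point,
\[
|\BE^{(1)}|^2 = E_t^{\,2}+(E_n^{(1)})^2,\qquad |\BE^{(2)}|^2 = E_t^{\,2}+r^2(E_n^{(1)})^2,
\]
from which the pointwise envelope
\[
\min\{1,r^2\}\,|\BE^{(1)}|^2\;\le\;|\BE^{(2)}|^2\;\le\;\max\{1,r^2\}\,|\BE^{(1)}|^2
\]
follows by viewing each squared magnitude as a non-negative combination of $E_t^{\,2}$ and $(E_n^{(1)})^2$.

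Finally I would upgrade these pointwise bounds to bounds on the global maxima. Taking the supremum of the upper inequality over $\GG$ yields $\tilde M_2^2\le\max\{1,r^2\}\,\tilde M_1^2$. For the reverse direction, evaluating the lower inequality at the point of $\GG$ where $|\BE^{(1)}|$ attains $\tilde M_1$ and then using $\tilde M_2\ge|\BE^{(2)}|$ at that point yields $\tilde M_2^2\ge\min\{1,r^2\}\,\tilde M_1^2$. Since all fields scale linearly with the boundary data, phase 1 breaks down before phase 2 iff $\tilde M_1/c^{(1)}>\tilde M_2/c^{(2)}$, equivalently $(c^{(2)})^2\tilde M_1^2>(c^{(1)})^2\tilde M_2^2$. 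Inserting the upper envelope shows that hypothesis \eq{1.10} is sufficient for this; the analogous insertion of the lower envelope into the reverse inequality gives \eq{1.11}.

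The only real subtlety, rather than a serious obstacle, is that $\tilde M_1$ and $\tilde M_2$ may be attained at distinct points of $\GG$, so a naive attempt to read both global inequalities off a single interface point fails. One must instead take the supremum of the upper pointwise bound globally, but evaluate the lower pointwise bound specifically at the $|\BE^{(1)}|$-maximiser; once this asymmetric bookkeeping is arranged, the remaining algebra is elementary.
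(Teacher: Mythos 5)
Your proposal is correct and follows essentially the same route as the paper: both rest on the interface jump conditions $E_t^{(2)}=E_t^{(1)}$, $\sigma^{(2)}E_n^{(2)}=\sigma^{(1)}E_n^{(1)}$ and the resulting pointwise comparison of $|\BE^{(1)}|^2=E_t^2+(E_n^{(1)})^2$ with $|\BE^{(2)}|^2=E_t^2+r^2(E_n^{(1)})^2$, which the paper phrases through the sign analysis of \eq{1.20} and you phrase as the equivalent envelope $\min\{1,r^2\}|\BE^{(1)}|^2\le|\BE^{(2)}|^2\le\max\{1,r^2\}|\BE^{(1)}|^2$. Your explicit bookkeeping for the case where the two phase-wise maxima are attained at distinct interface points is a detail the paper leaves implicit, but it does not change the substance of the argument.
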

\begin{proof}
Let $\E^{(1)}$ denote the field in phase 1 and $\BE^{(2)}$ denote the field in phase 2. At a point on the boundary between the phases, assuming the boundary is smooth at that point, we have the decomposition
\beq
\E^{(1)}=\E^{(1)}_n+\E^{(1)}_t, \quad
\E^{(2)}=\E^{(2)}_n+\E^{(2)}_t,
\eeq{1.11a}
where $n$ and $t$ label the normal and tangential components of each electric field, and these field components satisfy the jump conditions
\beq \Gs^{(2)}\E^{(2)}_n=\Gs^{(1)}\E^{(1)}_n, \quad \E^{(2)}_t=\E^{(1)}_t, \eeq{1.11b}
implied by continuity of the flux, and continuity to the potential at the interface.
To motivate the conditions which appear in \eq{1.10} and \eq{1.11} let us suppose that breakdown simultaneously begins to occur in both phase 1 and phase 2 at one point on
the interface between the phases. At that point we have
\beq
\frac{|\E^{(1)}|^2}{(c^{(1)})^2}=\frac{|\E^{(2)}|^2}{(c^{(2)})^2}=1
\Rightarrow (c^{(2)})^2 |\E^{(1)}|^2= (c^{(1)})^2 |\E^{(2)}|^2,
\eeq{1.11c}
and hence 
\beq
(c^{(2)})^2 (|\E^{(1)}_n|^2+|\E^{(1)}_t|^2)
=(c^{(1)})^2 (|\E^{(2)}_n|^2+|\E^{(2)}_t|^2).
\eeq{1.11d}
Substituting the jump conditions \eq{1.11b} into this and gathering terms, we see that
\beq
[(c^{(2)})^2-(c^{(1)})^2(\sigma^{(1)}/\sigma^{(2)})^2]|\E^{(1)}_n|^2=[(c^{(1)})^2-(c^{(2)})^2]|\E^{(1)}_t|^2.
\eeq{1.20}
If $(c^{(2)})^2-(c^{(1)})^2(\sigma^{(1)}/\sigma^{(2)})^2$ and $(c^{(1)})^2-(c^{(2)})^2$ have opposite signs then this equation will have no real solution 
for $|\E^{(1)}_n|$ and $|\E^{(1)}_t|$, other than the trivial solution $|\E^{(1)}_n|=|\E^{(1)}_t|=0$ which will not correspond to breakdown.
If in particular \eq{1.10} holds then the $=$ sign in \eq{1.20} can be replaced by a $>$ sign for all
nonzero $\E^{(1)}$ and tracing back the equations one concludes
that $(c^{(2)})^2 |\E^{(1)}|^2> (c^{(1)})^2 |\E^{(2)}|^2$, implying that any point on the interface between the phases, breakdown will first occur in 
phase 1. Similarly, if \eq{1.11} holds then the $=$ sign in \eq{1.20} can be replaced by a $<$ sign
for all nonzero $\E^{(1)}$, and tracing back the equations one concludes
that $(c^{(2)})^2 |\E^{(1)}|^2<(c^{(1)})^2 |\E^{(2)}|^2$, implying that any point on the interface between the phases, breakdown will first occur in 
phase 2. We remark if neither \eq{1.10} nor \eq{1.11} holds then \eq{1.20} may have a nontrivial solution for  $|\E^{(1)}_n|^2$ and $|\E^{(1)}_t|^2$,
and it seems likely that we cannot determine in which phase breakdown first occurs in without knowing the phase geometry and boundary fields. 
\end{proof}
\subsection{Elementary Breakdown Criteria}
If the materials in the body have not broken down anywhere, then they certainly will not have broken down at the surface $\Md\GO$. 
If we know $V(\x)$ at $\Md \Omega$, then we know $\mathbf{t}\cdot\Grad V$ for any unit vector $\mathbf{t}$ tangential to $\Md\Omega$. Also because
the materials are isotropic $\J\cdot\n/\sigma$ gives us the normal component of $\E$. Assuming we know $\sigma$ at the boundary, we can determine $\E$ at the boundary. \\

\noindent \textbf{Criterion 1}. In two dimensions if the material has not broken down in phase $\Ga=1,2$, then the inequality
\beq
	c^{(\alpha)} > |\E|= \sqrt{\left(\frac{\J\cdot\n}{ \sigma^{(\Ga)}}\right)^2+(\Grad V \cdot \mathbf{t})^2}
\eeq{1.30}
must be satisfied for all points at the surface $\Md\GO$ which are in phase $\Ga$. 
In three dimensions, if $\n, \textbf{t}_1, \textbf{t}_2$ are three orthonormal normal and tangential vectors, then if the material has not broken down in phase $\Ga=1,2$, the inequality
\beq
	c^{(\Ga)} > |\E |= \sqrt{\left(\frac{\J\cdot\n}{ \sigma^{(\Ga)}}\right)^2+(\Grad V \cdot \textbf{t}_1)^2 + (\Grad V\cdot \textbf{t}_2)^2}
\eeq{1.31}
must be satisfied for all points at the surface $\Md\GO$ which are in phase $\Ga$.

\subsubsection{Breakdown criteria based on the average fields}
Here we find expressions for the average over each phase of the electric field, and using the fact that the variance, over each phase,
of the electric field must be nonnegative, we obtain simple breakdown criteria.

Using the fact that $\textbf{E} = - \nabla V$, we have 
\beq \aE
= \frac{1}{|\Omega |} \int_{\Omega} -\Grad V~d\textbf{x}  
= \frac{1}{|\Omega|}\int_{\Md\Omega}-V\Bn~dS,
\eeq{1.32}
where $\Bn$ is the outward normal to the boundary $\Md\GO$. 
Now consider $\langle \J \rangle$. Let $x_i$ be the $i$-th coordinate. Then since $\nabla \cdot \J = 0$, we find $\nabla \cdot (x_i \J) = J_i$ where $J_i$ is the $i$-th component of $J$. Then directly by the divergence theorem, 
\beq \int_{\Omega} J_i ~d \textbf{x}  = \int_{\Omega} \Div (x_i\J) ~d \textbf{x}  = \int_{\Md \Omega} x_i (\J\cdot \n)~dS .
\eeq{1.33}
 Hence we can find both $\langle \textbf{E} \rangle$ and $\langle \J \rangle$ from the boundary measurements and from these we can determine 
\beq \aE_1= \frac{\langle \chi_1 \E \rangle }{f_1}=\frac{1}{|\Omega_1 |}\int_{\Omega_1}\E~d \Bx,\quad\mbox{ where }~ \Omega_1=\chi_1\Omega,
\eeq{1.34}
and 
\beq \aE_2= \frac{\langle \chi_2  \E \rangle }{f_2}=\frac{1}{|\Omega_2 |}\int_{\Omega_2}\E~d \Bx\quad\mbox{ where }~\Omega_2=\chi_2\Omega,
\eeq{1.35}
which represent the average over each phase of the electric field, and $\GO_1$ and $\GO_2$ are the regions occupied by phases 1 and 2 respectively.
To see this, notice that
\beqa \aE & = & \langle \chi_1 \E \rangle +\langle \chi_2 \E \rangle 
= f_1 \aE_1+ f_2 \aE_2, \nonum
\aJ& = & \langle \chi_1 \J\rangle + \langle \chi_2 \J\rangle
=\sigma_1 \langle \chi_1 \E\rangle + \sigma_2 \langle \chi_2 \E\rangle
=f_1\sigma_1\aE _1 +f_2\sigma_2\aE _2, \nonum
&~&
\eeqa{1.36}
which when solved for $\aE_1$ and $\aE_2$ give
\beq
\aE_1=\frac{\aJ-\sigma_2\aE}{f_1(\sigma_1-\sigma_2)},\quad \aE_2=\frac{\aJ-\sigma_1\aE}{f_2(\sigma_2-\sigma_1)},
\eeq{1.37}
where we have assumed that $\Gs_1\ne\Gs_2$. Analogous formulae to \eq{1.37} are well known in the theory of composites: see for example, equation (6) in
\cite{Polder:1946:EPM}.
Now from the positivity of the variance of the electric field in phase 1, and if the material has not
broken down in phase 1, we have
\beq 0\leq \lang[\Gc_1(\BE-\aE_1)\cdot(\BE-\aE_1)]\rang=\lang\Gc_1|\BE|^2\rang-f_1|\aE_1|^2\leq f_1[(c^{(1)})^2-|\aE_1|^2], \eeq{1.38}
with equality if and only if the field is constant in phase 1, having magnitude $|\BE|=c^{(1)}$. Similarly if the material has not broken down in
phase 2, then $(c^{(2)})^2\geq |\aE_2|^2$.

This gives us the following criterion. \\

\noindent \textbf{Criterion 2}. \textit{Let $\aE_{\Ga}$ be the average over phase $\Ga$ of the electric field, given in terms of the boundary data 
through \eq{1.37}, \eq{1.32} and \eq{1.33}. If neither phase has broken down then the inequality $|\aE_\Ga|\leq c^{(\Ga)}$ must hold. (Note that
the derivation of the inequality assumes that both phases have not broken down, not just phase $\Ga$.) } \\

A similar criterion for two phase periodic or statistically homogeneous composite materials (with a similar derivation) 
follows directly from the results of 
\cite{Lipton:2004:OLB}.

Notice that we also have
\beqa
\langle\J \cdot \E \rangle 
 & = & \frac{1}{|\Omega |}\left( \int_{\Omega_1} \sigma_1 |\E |^2+\int_{\Omega_2} \sigma_2 |\E |^2 \right) d \textbf{x} \nonum
 & \leq & \frac{1}{|\Omega |}\left( \sigma^{(1)} \int_{\Omega_1}  (c^{(1)})^2 d \textbf{x} + \sigma^{(2)}\int_{\Omega_2} (c^{(2)})^2 \right) d \textbf{x}  \nonum
 & \leq & \sigma^{(1)}(c^{(1)})^2f^{(1)}+\sigma^{(2)}(c^{(2)})^2f^{(2)},
\eeqa{1.39}
and 
\beq \int_{\Omega} \J\cdot \E \ d \x = \int_{\Omega} -\J \cdot \Grad V d \x = -\int_{\Md\Omega} V (\J \cdot \n) dS,
\eeq{1.40}
where we have made use of the fact that $\Div(V\J)=(\Grad V)\cdot\J+V\Div\J = \Grad V \cdot \J$. This gives us the following criterion.\\

\noindent \textbf{Criterion 3}. \textit{If the material has not broken down, $\langle \J \cdot \E \rangle$ satisfies the bounds \eq{1.39} and is given in 
terms of the boundary data by \eq{1.40}.}\\

In contrast to Criterion 2, this may still be useful even if $\aE_{1}$ and $\aE_{2}$  are both zero, as may happen if the body, phase geometry, and 
boundary conditions have appropriate symmetries.

\subsection{Improved criteria by perturbing the conductivity}

Criteria 2 and 3 are derived from inequalities on $\lang\Gc_1|\BE|^2\rang$ and $\lang \Gc_2|\BE|^2\rang$. If we can determine these quantities
directly from suitable measurements this will lead to improved breakdown criteria. It may be the case that the conductivities $\Gs_1$ and $\Gs_2$ can be perturbed
by a small amount, by for example changing the temperature, or by introducing boundary conditions which oscillate with time at some low fixed frequency
$\Go$ (in which case the conductivities may have a small imaginary part). 

Suppose we fix the potential $V=V_0$ on $\Md\Omega$ and that under the perturbation a quantity $a$ goes to $a+\delta a$, except $\delta V=0$ on $\Md\Omega$, i.e. the surface maintains the same voltage.

Then to first order in the perturbation we have
\beq \J+\delta\J =(\sigma+\delta\sigma)(\E+\delta\E)
\approx\sigma\E+(\delta\sigma)\E+\sigma(\delta\E),
\eeq{1.42}
implying
\beqa
	\int_\Omega (\E+\delta\E)\cdot(\J+\delta\J)~d \Bx
	 & \approx & \int_\Omega (\E+\delta\E)\cdot[\sigma\E+(\delta\sigma)\E+\sigma\delta\E]~d \Bx \nonum
	 & \approx & \int_\Omega \E\cdot\sigma\E~d \Bx +\int_\Omega \E\cdot(\delta\sigma)\E~d \Bx+2\int_\Omega \E\cdot(\sigma\delta\E)~d \Bx. \nonum & ~ &
\eeqa{1.43}
The quantities
\beqa &~&\int_\Omega (\E+\delta\E)\cdot(\J+\delta\J)\ d\Bx = -\int_{\Md\Omega} V(\J+\delta\J)\cdot\n~dS \quad \mbox{since $\delta V=0$ on $\Md\Omega$}, 
\nonum
&~&\int_{\Omega} \E\cdot\sigma\E  \ d\x  =  \int_{\Omega} \E\cdot\J~d \Bx=  -\int_{\Md\Omega} V (\J \cdot \n)~dS, \nonum
&~& \int_\Omega \E\cdot(\sigma\delta\E)~d \Bx  =  \int_\Omega \J\cdot\delta\E~d \Bx=-\int_{\Md\Omega}\delta V(\J\cdot\n)~dS=0\quad \mbox{since $\delta V=0$ on $\Md\Omega$}, \nonum & ~ &
\eeqa{1.44}
can all be evaluated from boundary data from the perturbed and unperturbed problems. Therefore, using \eq{1.43},
\beq
\int\E\cdot\delta\sigma\E~d \Bx
 =  \delta\sigma_1\int_{\Omega_1}|\E|^2~d \Bx+\delta\sigma_2\int_{\Omega_2}|\E|^2~d \Bx
\eeq{1.45}
can be determined from boundary data (to first order in the perturbation), together with 
\beq \int_{\Omega}\E\cdot\sigma\E~d \Bx
=\sigma_1\int_{\Omega_1}|\E|^2~d \Bx+\sigma_2\int_{\Omega_2}|\E|^2~d \Bx,
\eeq{1.46}
and these may be solved for $\int_{\Omega_1}|\E|^2 \ d\x$ and $\int_{\Omega_2}|\E|^2 \ d\x$ provided 
\beq \det \left[\begin{array}{c c}
\delta\sigma_1 & \delta\sigma_2
\\ \sigma_1 & \sigma_2
\end{array}\right] \neq 0.
\eeq{1.47}
Thus we obtain \\

\noindent \textbf{Criterion 4}. \textit{If the material has not broken down, we have the bound
\beq \int_{\Omega_\Ga}|\E|^2 \ d\x\leq |\GO|f_\Ga(c^{(\Ga)})^2, \eeq{1.48}
for $\Ga=1,2$ where if \eq{1.47} is satisfied the quantity of the left can be determined from boundary data obtained by perturbing the conductivities.}

%%%%%%%%%%%%%%%%%%%%%%%%%%%%%%%%%%%%%%%%%%%%%%%%%%%%%%%%%%%%%%%%%%%%%%%%%
\section{Real conductivity with two separate boundary conditions via the splitting method}
%%%%%%%%%%%%%%%%%%%%%%%%%%%%%%%%%%%%%%%%%%%%%%%%%%%%%%%%%%%%%%%%%%%%%%%%%%
\setcounter{equation}{0}
In this section we consider conditions which guarantee breakdown occurs within the body, for at least one of two potentials separately 
applied to the boundary of the body $\GO$.
The equations we consider are now
\beq \Div\BJ_i=0,\quad \BE_i=-\Grad V_i,\quad \BJ_i(\Bx)=\Gs(\Bx)\BE_i, \eeq{0.1aa}
where the scalar conductivity $\Gs(\Bx)$ is real, and $i=1$ labels the fields associated with one set of boundary conditions, while $i=2$ labels the fields 
associated with the other set of boundary conditions. If the material has not broken down for both boundary conditions, the inequality
\beq |\E_m^{(\alpha)}(\x)|\leq \ca \eeq{0.2}
must be satisfied for all $\Bx\in\GO$, for $m=1,2$ and for $\alpha=1,2$,  where $\E_m^{(\alpha)}(\x) = \chi_\alpha(\x)\E_m(\x)$. Formally the equations \eq{0.2} are equivalent
to the quasistatic equations
\beq  \Div\BJ=0,\quad \BE=-\Grad V,\quad \BJ(\Bx)=\Gs(\Bx)\BE, \eeq{0.3}
with a real conductivity $\Gs(\Bx)$ but complex fields
\beq \BJ=\BJ_1+i\BJ_2, \quad \BE=\BE_1+i\BE_2, \quad V=V_1+iV_2, \eeq{0.4}
where a subscript $1$ here denotes the real part, while a subscript $2$ here denotes the imaginary part. However, the breakdown conditions \eq{0.2} 
are not generally appropriate
for the quasistatic equations, as discussed later in section 4.1. Despite this, the connection with the complex conductivity equations allows us to use much of the
analysis of Thaler and Milton \cite{Thaler:2015:BVI}, who derived bounds on the volume fractions of the two phases, from boundary measurements
using the splitting method introduced by Milton and Nguyen \cite{Milton:2011:BVF}. (See also the paper of Kang, Lim, Lee, Li, and Milton \cite{Kang:2014:BSI}
which addresses the problem of bounding
the volume fraction from boundary measurements with complex conductivities using the translation method, extending earlier work of Kang, Kim, and Milton \cite{Kang:2011:SBV}
and Kang and Milton \cite{Kang:2013:BVF3d} that bounded the volume fractions
using the translation method when the conductivities were real.) 
By contrast, we will assume the volume fractions are known, but instead find boundary measurements which necessarily signal that \eq{0.2} is violated. 

Thaler and Milton \cite{Thaler:2015:BVI} consider the quantity, for $\x \in \Omega, \mathbf{c}^{(\alpha)} \in \mathbb{R}^2$, and $\alpha = 1,2$
\begin{equation}\label{g}
\ga(\x;\mathbf{c}^{(\alpha)}) := \displaystyle\sum_{m=1}^2 c^{(\alpha)}_m \left[\E_m^{(\alpha)}(\x) - \dfrac{\chi_\alpha(\x)}{f_{\Ga}}\langle\E_m^{(\alpha)}\rangle\right],
\end{equation}
with $\E_m^{(\alpha)}(\x) = \chi_\alpha(\x)\E_m(\x)$. From the nonnegativity of the variance $\langle \ga \cdot \ga \rangle \geq 0$, they deduce that the symmetric matrix 
\beq \BS^{(\alpha)} = \left [
\begin{matrix}
A_{11}^{(\alpha)} - \dfrac{D_{11}^{(\alpha)}}{f_{\Ga}} && A_{12}^{(\alpha)} - \dfrac{D_{12}^{(\alpha)}}{f_{\Ga}} \\
A_{21}^{(\alpha)} - \dfrac{D_{21}^{(\alpha)}}{f_{\Ga}} && A_{22}^{(\alpha)} - \dfrac{D_{22}^{(\alpha)}}{f_{\Ga}} 
\end{matrix}
\right ] \eeq{0.6}
 must be positive semi-definite, where $A_{mn}^{(\alpha)} = \langle \E_m^{(\alpha)} \cdot \E_n^{(\alpha)} \rangle$ and $D_{mn}^{(\alpha)} = \langle \E_m^{(\alpha)} \rangle \cdot \langle \E_n^{(\alpha)} \rangle$. The average fields $\langle \E_m^{(\alpha)} \rangle$ and hence the $D_{mn}^{(\alpha)}$ can be determined from boundary data, but not the constants 
$A_{mn}^{(\alpha)}$.

Using the splitting method, we can split 
$$\langle \E_k \cdot \J_l \rangle = \langle \chi_1 \E_k \cdot \J_l \rangle + \langle \chi_2 \E_k \cdot \J_l \rangle
=\sigma^{(1)}A_{kl}^{(1)}+\sigma^{(2)}A_{kl}^{(2)}.$$ 
Notice that in contrast to the complex conductivity case, 
we have $\langle \E_k \cdot \J_l \rangle = \langle \E_l \cdot \J_k \rangle$. This gives us the following linear system:
\begin{equation}\label{powersystem}
\begin{bmatrix} 
\sigma^{(1)} & \sigma^{(2)} & 0 & 0 & 0 & 0 \\[0.1cm]
0 & 0 & \sigma^{(1)} & \sigma^{(2)} & 0 & 0 \\[0.1cm]
0 & 0 & 0 & 0 & \sigma^{(1)} & \sigma^{(2)}
\end{bmatrix}
\begin{bmatrix}  A_{11}^{(1)} \\[0.1cm] A_{11}^{(2)} \\[0.1cm] A_{21}^{(1)} \\[0.1cm] A_{21}^{(2)} \\[0.1cm] A_{22}^{(1)} \\[0.1cm] A_{22}^{(2)} \end{bmatrix}
=
\begin{bmatrix} \langle \E_1\cdot\J_1\rangle \\[0.1cm] \langle\E_1\cdot \J_2\rangle \\[0.1cm] \langle \E_2 \cdot \J_2 \rangle \end{bmatrix} .
\end{equation}
We pick our free variables $x^{(1)} := A_{11}^{(1)}, y^{(1)} := A_{21}^{(1)}, z^{(1)} := A_{22}^{(1)}$. Let $x^{(2)} = A_{11}^{(2)}, y^{(2)} = A_{21}^{(2)}, z^{(2)} = A_{22}^{(2)}$, then from our linear system we find 
\begin{align} \label{Aeqns}
\begin{split}
x^{(2)} & = (\langle \E_1\cdot\J_1\rangle - \sigma^{(1)} x^{(1)})/\sigma^{(2)}, \\
y^{(2)} & = (\langle \E_1\cdot\J_2\rangle - \sigma^{(1)} y^{(1)})/\sigma^{(2)}, \\
z^{(2)} & = (\langle \E_2\cdot\J_2\rangle - \sigma^{(1)} z^{(1)})/\sigma^{(2)}.
\end{split}
\end{align}
This, of course, has a unique solution if and only if $\sigma^{(2)} \neq 0$. We can assume that at least one of the phases has a nonzero conductivity (otherwise, we have nothing interesting to say) and then assign the label 2 to a phase with nonzero conductivity to ensure that $\sigma^{(2)} \neq 0$. If \eq{0.2} is satisfied, then the inequalities
$x^{(\alpha)}, y^{(\alpha)}, z^{(\alpha)} \leq (\ca)^2 f_{\Ga}$ must hold for $\Ga=1,2$, and these inequalities with \eq{Aeqns} define a rectangular prism in $(x^{(1)}, y^{(1)}, z^{(1)})$ space, which we 
call the compatible region. 

\begin{remark}
        This analysis shows one can apply the techniques in the paper of Milton and Thaler \cite{Thaler:2015:BVI} to bound the volume fractions of the phases in the case of real conductivity:
the volume fractions must be such that there is a nonempty feasible region of values of $(x^{(1)},y^{(1)},z^{(1)})$ such that the matrices $S^{(1)}$ and $S^{(2)}$ are both
positive semidefinite. We do not explore this further here.
\end{remark}

\begin{remark}
Before proceeding, we note that now, the matrix $\BS^{(\alpha)}$ can be written as 
$$\BS^{(\alpha)} = 
\left [
\begin{matrix}
x^{(\alpha)} - \dfrac{D_{11}^{(\alpha)}}{f_{\Ga}} && y^{(\alpha)} - \dfrac{D_{12}^{(\alpha)}}{f_{\Ga}} \\
y^{(\alpha)} - \dfrac{D_{21}^{(\alpha)}}{f_{\Ga}} && z^{(\alpha)} - \dfrac{D_{22}^{(\alpha)}}{f_{\Ga}} 
\end{matrix}
\right ].$$ 
\end{remark}

The values of $x^{(1)}$, $y^{(1)}$ and $z^{(1)}$ for which $\BS^{(1)}$ and $\BS^{(2)}$ are both positive semidefinite define what we call the feasible region in 
$(x^{(1)}, y^{(1)}, z^{(1)})$ space. 

\subsection{Bounds in Three Dimensions}
If the feasible region is empty or does not contain the compatible region then the material must have broken down for at least one of the two boundary conditions. 
This condition is however not so easy to check without plotting the regions in  $(x^{(1)}, y^{(1)}, z^{(1)})$ space, so let us now seek simpler algrebraic conditions
(which may however not be as tight). 
The fact that the $\BS^{(\alpha)}$ are positive semi-definite if breakdown has not occurred imposes the following conditions:
\begin{equation}\label{tr11}
x^{(\alpha)} - \dfrac{D_{11}^{(\alpha)}}{f_{\Ga}} \geq 0 \Rightarrow 	(\ca)^2 (f_{\Ga})^2 \geq D_{11}^{(\alpha)},
\end{equation}
\begin{equation}\label{tr22}
z^{(\alpha)} - \dfrac{D_{22}^{(\alpha)}}{f_{\Ga}} \geq 0 \Rightarrow 	(\ca)^2 (f_{\Ga})^2 \geq D_{22}^{(\alpha)},
\end{equation}
\begin{align}\label{detBound}
\det S^{(\alpha)} \geq 0 & \Leftrightarrow \left(x^{(\alpha)} - \dfrac{D_{11}^{(\alpha)}}{f_{\Ga}}\right)\left(z^{(\alpha)} - \dfrac{D_{22}^{(\alpha)}}{f_{\Ga}} \right) \geq \left(y^{(\alpha)} - \dfrac{D_{12}^{(\alpha)}}{f_{\Ga}}\right)^2 \nonumber \\
& \Rightarrow \left((\ca)^2 f_{\Ga} - \dfrac{D_{11}^{(\alpha)}}{f_{\Ga}}\right)\left((\ca)^2 f_{\Ga} - \dfrac{D_{22}^{(\alpha)}}{f_{\Ga}} \right) \geq \left(y^{(\alpha)} - \dfrac{D_{12}^{(\alpha)}}{f_{\Ga}}\right)^2,
\end{align}
where we have used the fact that $x^{(\alpha)}, z^{(\alpha)} \leq (\ca)^2 f_{\Ga}$.

The inequalities (\ref{tr11}) and (\ref{tr22}) give us elementary upper bounds on $D_{11}^{(\Ga)}$ and $D_{22}^{(\Ga)}.$
A potentially sharper bound on how close the $D_{nn}^{(\alpha)}$ can approach these elementary upper bounds is given by (\ref{detBound}), depending on the value of the right hand side. The $y^{(\alpha)}$ are unknown, which presents a problem. There is a way to deal with this: sum (\ref{detBound}) over $\alpha$ (possibly multiplying each equation by a 
positive weight $w^{(\alpha)}$) and substitute in the expression for $y^{(2)}$ given in (\ref{Aeqns}). Take the minimum of the right hand side, when treated as a quadratic with respect to $y^{(1)}$, which is likely to be nonzero. 

We have shown the following theorem.
\begin{theorem}
	Suppose one of the phases has nonzero conductivity; let it be phase 2, so $\sigma^{(2)} \neq 0$. Suppose neither of the volume fractions $f_{\Ga}$ are zero, and that both are known. Define $x^{(\alpha)}, y^{(\alpha)}, z^{(\alpha)}$ as before. Suppose further that the material has not broken down. Then 
the feasible region must be nonempty and intersect the compatible rectangular prism region and \emph{(\ref{Aeqns})-(\ref{detBound})} are satisfied.
\end{theorem}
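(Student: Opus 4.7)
The plan is to verify the three claims by exhibiting one point in $(x^{(1)}, y^{(1)}, z^{(1)})$-space that lies in both the compatible and the feasible region, namely the physical values $x^{(\alpha)} = A_{11}^{(\alpha)}$, $y^{(\alpha)} = A_{21}^{(\alpha)}$, $z^{(\alpha)} = A_{22}^{(\alpha)}$ coming from the actual solution fields $\E_1, \E_2$ of the conductivity problem. The splitting identity $\langle \E_k \cdot \J_l \rangle = \sigma^{(1)} A_{kl}^{(1)} + \sigma^{(2)} A_{kl}^{(2)}$, combined with $\sigma^{(2)} \neq 0$, forces these physical values to satisfy (\ref{Aeqns}) by construction, so the physical point sits on the affine subspace underlying both regions and (\ref{Aeqns}) holds automatically.

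Next I would show that this point lies in the compatible rectangular prism. The no-breakdown hypothesis $|\E_m^{(\alpha)}(\x)| \leq \ca$ for almost every $\x$, on averaging, gives $A_{mm}^{(\alpha)} = \langle \chi_\alpha |\E_m|^2 \rangle \leq f_\alpha (\ca)^2$; a Cauchy--Schwarz step then bounds $|A_{21}^{(\alpha)}| \leq \sqrt{A_{11}^{(\alpha)} A_{22}^{(\alpha)}} \leq f_\alpha (\ca)^2$. Hence $x^{(\alpha)}, y^{(\alpha)}, z^{(\alpha)} \leq f_\alpha (\ca)^2$ for $\alpha = 1, 2$, which places the physical point in the compatible prism and supplies the upper bounds that were used in deriving the right-hand sides of (\ref{tr11})--(\ref{detBound}).

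For the feasible region I would invoke the variance argument of Thaler and Milton quoted just before (\ref{g}): non-negativity of $\langle \ga \cdot \ga \rangle$ for every $\mathbf{c}^{(\alpha)} \in \mathbb{R}^2$ is exactly $\mathbf{c}^{(\alpha)} \cdot \BS^{(\alpha)} \mathbf{c}^{(\alpha)} \geq 0$, so $\BS^{(\alpha)} \succeq 0$ for $\alpha = 1, 2$; the physical point is therefore in the feasible region, which in particular is nonempty and meets the compatible prism. Diagonal non-negativity of $\BS^{(\alpha)}$, combined with $x^{(\alpha)}, z^{(\alpha)} \leq f_\alpha(\ca)^2$, yields (\ref{tr11}) and (\ref{tr22}). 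For (\ref{detBound}), the condition $\det \BS^{(\alpha)} \geq 0$ gives the first inequality directly; because both factors on the left are non-negative, replacing $x^{(\alpha)}$ and $z^{(\alpha)}$ by their upper bounds $f_\alpha (\ca)^2$ only enlarges the product, producing the second inequality.

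The main obstacle is not in the derivation above, which merely splices the variance inequality to the pointwise no-breakdown bound, but in turning the statement into a practical test: $y^{(\alpha)}$ is never pinned down by boundary data. To obtain a scalar breakdown criterion from (\ref{detBound}) one must eliminate $y^{(1)}$ (and with it $y^{(2)}$, via (\ref{Aeqns})) by minimising a suitably weighted sum of the right-hand sides over all real $y^{(1)}$ -- a constrained quadratic optimisation whose minimiser is generically nonzero. That optimisation, rather than the logical chain proving the theorem, is the substantive content behind the last sentence of the statement.
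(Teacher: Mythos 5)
Your proposal is correct and follows essentially the same route as the paper: the actual values $A_{mn}^{(\alpha)}$ of the solution fields satisfy (\ref{Aeqns}) by the splitting identity, lie in the compatible prism by the no-breakdown bound, and lie in the feasible region by the variance argument giving $\BS^{(\alpha)}\succeq 0$, whence the two regions intersect and (\ref{tr11})--(\ref{detBound}) follow. Your explicit Cauchy--Schwarz step for $y^{(\alpha)}=A_{21}^{(\alpha)}$ fills in a detail the paper leaves implicit, but it is the same argument.
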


\subsection{Improved Bounds in Two Dimensions}
\label{sec:ImprovedBounds}
In \cite{Thaler:2015:BVI}, having knowledge of two additional null Lagrangians gives an improved bound on the volume fraction. 
This is also the case in our situation; the additional null Lagrangians allow us to obtain a sharper bound than in (\ref{detBound}). The null-Lagrangians, which can be determined from boundary measurements, are $\BE_1 \cdot \BR_\perp \BE_2$ and $\BJ_1 \cdot \BR_\perp \BJ_2$ where 
\beq \BR_\perp=\begin{bmatrix} 0 & 1 \\ -1 & 0 \end{bmatrix}
\eeq{2.18}
denotes the matrix for a $90\,^{\circ}$ clockwise rotation. Assuming $|\sigma^{(1)}| \ne |\sigma^{(2)}|$ they  show the quantities $B_{12}^{(\Ga)}\equiv\lang\BE_1^{(\Ga)}\cdot\BR_\perp\BE_2^{(\Ga)}\rang$ can be expressed in terms of these null-Lagrangians through the identity
\beq
  \begin{bmatrix} 
    B^{(1)}_{12} \\[0.1cm] B^{(2)}_{12} 
  \end{bmatrix}
  = \dfrac{1}{|\sigma^{(2)}|^2 -
    |\sigma^{(1)}|^2}
  \begin{bmatrix} 
    |\sigma^{(2)}|^2 \langle \BE_1 \cdot \BR_\perp \BE_2 \rangle -
    \langle \BJ_1 \cdot \BR_\perp \BJ_2 \rangle \\[0.1cm]
    -|\sigma^{(1)}|^2 \langle \BE_1 \cdot \BR_\perp \BE_2 \rangle +
    \langle \BJ_1 \cdot \BR_\perp \BJ_2 \rangle.
  \end{bmatrix} 
\eeq{2.18a}
The paper \cite{Thaler:2015:BVI} considers the following quantity. For  $\mathbf{c}^{(\alpha)}, \mathbf{d}^{(\alpha)}$ in $\mathbb{R}^2$ and for $\alpha = 1, 2$, define
\begin{align}\label{h}
\ha(\x;\mathbf{c}^{(\alpha)}, \mathbf{d}^{(\alpha)}) := \displaystyle\sum_{m=1}^{2} & c^{(\alpha)}_m \left[\E^{(\alpha)}_m(\x) - \dfrac{\chi_\alpha(\x)}{f_{\Ga}}\langle\E_m^{(\alpha)}\rangle\right] \\ & +  \displaystyle\sum_{n=1}^{2} d^{(\alpha)}_n \left[R_{\perp}\E^{(\alpha)}_n(\x) - \dfrac{\chi_\alpha(\x)}{f_{\Ga}}\langle R_{\perp} \E^{(\alpha)}_n\rangle\right].
\end{align}
From the positivity of the variance $\langle \ha \cdot \ha \rangle \geq 0$, for all $\mathbf{c}^{(\alpha)}, \mathbf{d}^{(\alpha)}$ in $\mathbb{R}^2$,  
they derive the improved bounds
\[
\det[\BS^{(1)}]\geq \Gj^{(1)}\quad{\rm and}\quad \det[\BS^{(2)}]\geq \Gj^{(2)},
\]
where
\beq
\Gj^{(\alpha)} := \left[B_{12}^{(\Ga)} - \frac{1}{f^{(\alpha)}}\langle\BE_1^{(\alpha)}\rangle \cdot R_{\perp} \langle\BE_2^{(\alpha)}\rangle\right]^2 \ge 0.
\eeq{2.19}
can be determined from boundary measurements. These improved bounds imply a reduced feasible region in  $(x^{(1)}, y^{(1)}, z^{(1)})$ space
and imply
\begin{align}\label{lgrnBound}
\left((\ca)^2 f_{\Ga} - \dfrac{D_{11}^{(\alpha)}}{f_{\Ga}}\right)\left((\ca)^2 f_{\Ga} - \dfrac{D_{22}^{(\alpha)}}{f_{\Ga}} \right) \geq \left(y^{(\alpha)} - \dfrac{D_{12}^{(\alpha)}}{f_{\Ga}}\right)^2 + \tau_f^{(\alpha)},
\end{align}
where we have again used the fact that $x^{(\alpha)}, z^{(\alpha)} \leq (\ca)^2 f_{\Ga}$.

Recall the issue with the $y^{(\alpha)}$ being unknown. Equation (\ref{lgrnBound}) allows us to bypass the $y^{(\alpha)}$ altogether, and obtain
\begin{align}\label{lgrnBound1}
\left((\ca)^2 f_{\Ga} - \dfrac{D_{11}^{(\alpha)}}{f_{\Ga}}\right)\left((\ca)^2 f_{\Ga} - \dfrac{D_{22}^{(\alpha)}}{f_{\Ga}} \right) \geq \tau_f^{(\alpha)},
\end{align}
which gives us separate inequalities that constrain how close the $D_{nn}^{(\Ga)}$ can approach their elementary upper bounds. Alternately, we can again take a weighted sum
of \eq{lgrnBound} over $\alpha$ and minimize the quadratic on the right hand side, and derive a sharper bound than in (\ref{lgrnBound1}).

\begin{theorem}
	Suppose, as before, $\sigma^{(2)} \neq 0,$ neither volume fractions $f_{\Ga}$ are zero, and both are known. Define $x^{(\alpha)}, y^{(\alpha)}, z^{(\alpha)}$ as before, and suppose the two-dimensional material has not broken down. Then, the reduced feasible region must be nonempty and intersect the compatible rectangular prism region, and \emph{(\ref{Aeqns})-(\ref{tr22}),(\ref{lgrnBound}),(\ref{lgrnBound1})} are satisfied.
\end{theorem}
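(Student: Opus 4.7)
The plan is to collect, under the no-breakdown hypothesis, three independent facts: (a) the splitting identity (\ref{Aeqns}), (b) the pointwise bounds $|\BE_m^{(\alpha)}(\x)|\leq c^{(\alpha)}$ and their integrated consequences, and (c) the sharpened determinantal inequality $\det \BS^{(\alpha)}\geq\tau^{(\alpha)}$ derived from the variance of $\ha$. Assembling these yields every stated inequality and shows the reduced feasible region meets the compatible rectangular prism at the triple realized by the actual fields.

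Items (a) and (b) are already essentially in hand. For (a), the three scalars $\langle \BE_k\cdot\BJ_l\rangle$ with $k\leq l$ are boundary-computable via $-\int_{\Md\Omega} V_k(\BJ_l\cdot\n)\,dS$ exactly as in (\ref{1.40}), and the splitting $\langle \BE_k\cdot\BJ_l\rangle = \sigma^{(1)}A_{kl}^{(1)} + \sigma^{(2)}A_{kl}^{(2)}$ is immediate from $\chi_1+\chi_2=1$; the system (\ref{powersystem}) inverts uniquely once $\sigma^{(2)}\neq 0$, yielding (\ref{Aeqns}). For (b), integrating $|\BE_m^{(\alpha)}(\x)|^2\leq (c^{(\alpha)})^2$ over $\Omega_\alpha$ gives $A_{mm}^{(\alpha)}\leq (c^{(\alpha)})^2 f_\alpha$, i.e.\ $x^{(\alpha)},z^{(\alpha)}\leq (c^{(\alpha)})^2 f_\alpha$, which carves out the compatible prism; combining this with the diagonal entries of $\BS^{(\alpha)}\succeq 0$ (from $\langle\ga\cdot\ga\rangle\geq 0$) produces (\ref{tr11}) and (\ref{tr22}).

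The main work is (c). I would expand $\langle \ha\cdot\ha\rangle\geq 0$ for all $\mathbf{c}^{(\alpha)},\mathbf{d}^{(\alpha)}\in\RR^2$ into the block form
\[
\mathbf{c}^{(\alpha)\top}\BS^{(\alpha)}\mathbf{c}^{(\alpha)}+\mathbf{d}^{(\alpha)\top}\BS^{(\alpha)}\mathbf{d}^{(\alpha)}+2\mathbf{c}^{(\alpha)\top}\BT^{(\alpha)}\mathbf{d}^{(\alpha)}\geq 0,
\]
using $\BR_\perp^\top\BR_\perp=I$ to identify the $\mathbf{d}$-quadratic with $\BS^{(\alpha)}$ itself, and $\BR_\perp^\top=-\BR_\perp$ to show that the cross-block $\BT^{(\alpha)}$ is antisymmetric with $(1,2)$-entry exactly $B_{12}^{(\alpha)}-\langle\BE_1^{(\alpha)}\rangle\cdot\BR_\perp\langle\BE_2^{(\alpha)}\rangle/f_\alpha$. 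Minimizing over $\mathbf{d}^{(\alpha)}$ (equivalently, taking the Schur complement of the lower-right $\BS^{(\alpha)}$ block of the associated $4\times 4$ PSD matrix when $\BS^{(\alpha)}\succ 0$, and extending by continuity otherwise) reduces the condition to $\BS^{(\alpha)}-\BT^{(\alpha)\top}\BS^{(\alpha),-1}\BT^{(\alpha)}\succeq 0$. A direct $2\times 2$ computation, exploiting the fact that $\BT^{(\alpha)}$ is a scalar multiple of $\BR_\perp$, gives $\BT^{(\alpha)\top}\BS^{(\alpha),-1}\BT^{(\alpha)}=(\tau^{(\alpha)}/\det\BS^{(\alpha)})\BS^{(\alpha)}$, whence $\det\BS^{(\alpha)}\geq \tau^{(\alpha)}$ with $\tau^{(\alpha)}$ as in (\ref{2.19}); note that $B_{12}^{(\alpha)}$ is itself boundary-determined via the null-Lagrangian identity (\ref{2.18a}). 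Substituting $x^{(\alpha)},z^{(\alpha)}\leq (c^{(\alpha)})^2 f_\alpha$ on the left-hand side of $\det\BS^{(\alpha)}\geq \tau^{(\alpha)}$ produces (\ref{lgrnBound}), and dropping the nonnegative squared term on the right of (\ref{lgrnBound}) yields (\ref{lgrnBound1}).

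For nonemptiness, the physical triple $(A_{11}^{(1)},A_{21}^{(1)},A_{22}^{(1)})$ realized by the true fields lies in the reduced feasible region by construction (both $\BS^{(\alpha)}$ are PSD and satisfy the sharpened determinantal bound) and, by (b), in the compatible prism; hence the two regions intersect. I expect the main obstacle to be the careful sign bookkeeping in step (c)---pinning down the cross-block $\BT^{(\alpha)}$ as antisymmetric with the correct $(1,2)$-entry, and handling the Schur-complement step in the degenerate case $\det\BS^{(\alpha)}=0$ by a perturbation argument. Everything else is routine algebra or already carried out in \cite{Thaler:2015:BVI}.
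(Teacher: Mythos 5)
Your proposal is correct and follows essentially the same route as the paper: the splitting system gives (\ref{Aeqns}), positivity of $\langle\ga\cdot\ga\rangle$ gives (\ref{tr11})--(\ref{tr22}), and positivity of $\langle\ha\cdot\ha\rangle$ gives $\det\BS^{(\alpha)}\geq\Gj^{(\alpha)}$, which combined with $x^{(\alpha)},z^{(\alpha)}\leq(\ca)^2f_\Ga$ yields (\ref{lgrnBound}) and (\ref{lgrnBound1}), with nonemptiness witnessed by the physically realized triple. The only difference is that you reconstruct in detail (correctly, via the antisymmetric cross-block $\BT^{(\alpha)}=tR_\perp$ and the Schur complement identity $R_\perp^{T}\BS^{-1}R_\perp=\BS/\det\BS$) the improved determinant bound that the paper simply imports from Thaler and Milton.
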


\begin{remark}
	We suspect that although there will be geometries where $\tau_f^{(\alpha)} = 0$, it will usually be nonzero.
\end{remark}

\section{Complex Conductivity}
\setcounter{equation}{0}
\subsection{Simple Conditions on the local field for the onset of nonlinearities}

The electromagnetic response of a body to oscillating fields of a given frequency $\Go$ is well 
described by the quasistatic equations when the relevant wavelengths and attenuation lengths are large compared to the body. The quasistatic
equations, in a locally isotropic body, are just like those for static conductivity
\beq \BJ(\Bx)=\Gs(\Bx)\BE(\Bx), \quad \Div\BJ=0,\quad \BE=-\Grad V, \eeq{2.1}
except the conductivity $\Gs(\Bx)$, and the fields $\BJ$, $\BE$ and $V$ are complex:
\beq \Gs=\Gs_1+i\Gs_2, \quad \BJ=\BJ_1+i\BJ_2, \quad \BE=\BE_1+i\BE_2, \quad V=V_1+iV_2, \eeq{2.2}
where a subscript $1$ denotes the real part, while a subscript $2$ denotes the imaginary part. 
The local physical electric field is
\beq \Be(\Bx,t)=\Real\{(\BE_1(\Bx)+i\BE_2(\Bx))e^{-i\omega t}\}, \eeq{2.3}
where $t$ is the time. In general as the time $t$ varies the endpoint of the vector $\Be(\Bx,t)$ describes an ellipse (in the plane
spanned by $\BE_1$ and $\BE_2$). Let us assume that the onset of nonlinearities at a point $\Bx$ just depends on the value the
electric field $\Be(\Bx,t)$ takes at the point $\Bx$ as time varies. Since the condition must be independent of how we choose the origin of time, 
the condition must only depend on the invariants, namely the lengths of the minor and major axes of the ellipse. If at our point $\Bx$ the major axis happened to coincide
with the value $\BE_1(\Bx)$ of $\Be(\Bx,t)$ at $t=0$ then its easy to check that $\BE_2(\Bx)$ is the minor axis of the ellipse, and since these axes are perpendicular
$\BE_1(\Bx)\cdot\BE_2(\Bx)=0$. In this case our invariants can be taken as $|\BE_1(\Bx)|$ and $|\BE_2(\Bx)|$. More generally, if we choose a different origin of time $t_0$
(which could depend on $\Bx$), then the physical electric field is
\beq \Be(\Bx,t)=\Real\{(\BE_1'(\Bx)+i\BE_2'(\Bx))e^{-i\omega (t-t_0)}\}, \eeq{2.4}
where
\beq \BE_1'(\Bx)+i\BE_2'(\Bx)=e^{-i\omega t_0}(\BE_1(\Bx)+i\BE_2(\Bx)). \eeq{2.5}
Thus we have the identification
\beq \BE_1'(\Bx)=[\cos(\Go t_0)\BE_1(\Bx)+\sin(\Go t_0)\BE_2(\Bx)], \quad \BE_2'(\Bx)=[\cos(\Go t_0)\BE_2(\Bx)-\sin(\Go t_0)\BE_1(\Bx)], \eeq{2.6}
and 
\beq \BE_1'(\Bx)\cdot\BE_2'(\Bx)=\cos(2\Go t_0)[\BE_1(\Bx)\cdot\BE_2(\Bx)]-\sin(2\Go t_0)[|\BE_1(\Bx)|^2-|\BE_2(\Bx)|^2]/2 \eeq{2.7}
is zero when
\beq \tan(2\Go t_0)=\frac{2\BE_1(\Bx)\cdot\BE_2(\Bx)}{|\BE_1(\Bx)|^2-|\BE_2(\Bx)|^2}, \eeq{2.8}
or when $2\Go t_0=\pi/2$ if $|\BE_1(\Bx)|^2=|\BE_2(\Bx)|^2$. With $t_0$ chosen in this way, the invariants which are the axes of the ellipse 
can be taken as $|\BE'_1(\Bx)|$ and $|\BE'_2(\Bx)|$. 

\begin{remark}
	In summary, at each point $\Bx$ the endpoint of the physical electric field vector $\Be(\Bx,t)=\Real\{(\BE_1(\Bx)+i\BE_2(\Bx))e^{-i\omega t}\}$ 
describes at ellipse in the plane spanned by $\BE_1(\Bx)$ and $\BE_2(\Bx)$ with axes  $|\BE'_1(\Bx)|$ and $|\BE'_2(\Bx)|$, where $\BE'_1(\Bx)$ and
$\BE'_2(\Bx)$ are given by \eq{2.6} and $t_0$ is given by \eq{2.8}.
\end{remark}

\begin{remark}
	The condition for the onset of nonlinearities for at the point $\Bx$ in phase $\alpha$, if local,  must just depend only on the local invariants of the field,
namely the ellipse axes. Thus the condition for the onset of nonlinearities at the point $\Bx$ in phase $\Ga$ can be expressed in the form
\beq F^{(\Ga)}(|\BE'_1(\Bx)|,|\BE'_2(\Bx)|)\geq 0, \eeq{2.9}
for some function $F^{(\Ga)}$ which is symmetric in its arguments. We will only consider the simple case where
\beq F^{(\Ga)}(|\BE'_1(\Bx)|,|\BE'_2(\Bx)|)=|\BE'_1(\Bx)|^2+|\BE'_2(\Bx)|^2-(c^{(\Ga)})^2=|\BE_1(\Bx)|^2+|\BE_2(\Bx)|^2-(c^{(\Ga)})^2, \eeq{2.10}
for some positive constants $c^{(\Ga)}$ (where the last identity in \eq{2.10} follows by taking the modulus of both sides of \eq{2.5}). The motivation for considering
such a criterion is not just for simplicity, but also because the intensity $I(\Bx)=|\BE_1(\Bx)|^2+|\BE_2(\Bx)|^2$ is proportional the time averaged dissipation of electrical power into 
heat, and it makes physical sense that the materials may break down if this is too high. To see this, note that the physical electric and current fields at the point $\Bx$
can be expressed as 
\beqa \Be(\Bx,t) & = &\Real\{(\BE_1(\Bx)+i\BE_2(\Bx))e^{-i\omega t}\} \nonum
& = & [(\BE_1(\Bx)+i\BE_2(\Bx))e^{-i\omega t}+(\BE_1(\Bx)-i\BE_2(\Bx))e^{+i\omega t}]/2, \nonum
 \Bj(\Bx,t)& = & \Real\{(\BJ_1(\Bx)+i\BJ_2(\Bx))e^{-i\omega t}\} \nonum
& = & [(\BJ_1(\Bx)+i\BJ_2(\Bx))e^{-i\omega t}+(\BJ_1(\Bx)-i\BJ_2(\Bx))e^{+i\omega t}]/2. \eeqa{2.10-0}
Their dot product $\Bj(\Bx,t)\cdot\Be(\Bx,t)$ represents the instantaneous electrical power density which is dissipated into heat. Averaging over time,
and using the fact that the time average of $e^{-2i\omega t}$ and $e^{+2i\omega t}$ is zero, we see that
\beqa \lang\Bj(\Bx,t)\cdot\Be(\Bx,t)\rang_t & = & [\BJ_1(\Bx)\cdot\BE_1(\Bx)+\BJ_2(\Bx)\cdot\BE_2(\Bx)]/2 \nonum
& = &\Gs_1[|\BE_1(\Bx)|^2+|\BE_2(\Bx)|^2]/2=\Gs_1I(\Bx)/2,
\eeqa{2.10-1}
in which $\lang\cdot\rang_t$ denotes a time average. 
Alternative criteria can also have merit from a physical viewpoint. For example, if the frequency $\Go$ is low the breakdown of materials might be dictated
by the peak strength of the electric field, in which case the criterion would be
\beq F^{(\Ga)}(|\BE'_1(\Bx)|,|\BE'_2(\Bx)|)=\max\{|\BE'_1(\Bx)|,|\BE'_2(\Bx)|\}-c^{(\Ga)}, \eeq{2.10a}
or it could be dictated by the peak value of the power dissipation into heat,
\beqa \Bj(\Bx,t)\cdot\Be(\Bx,t) & = &  [\Gs_1I(\Bx)+\Real\{(\BJ_1(\Bx)+i\BJ_2(\Bx))\cdot(\BE_1(\Bx)+i\BE_2(\Bx))e^{-2i\omega t}\}]/2,\nonum
&~&
\eeqa{2.10-2}
in which case the criterion would be
\beqa &~& F^{(\Ga)}(|\BE'_1(\Bx)|,|\BE'_2(\Bx)|) \nonum
&~&\quad  =\Gs_1^{(\Ga)}[|\BE'_1(\Bx)|^2+|\BE'_2(\Bx)|^2]/2+\sqrt{(\Gs_1^{(\Ga)})^2+(\Gs_2^{(\Ga)})^2}||\BE'_1(\Bx)|^2-|\BE'_2(\Bx)|^2|/2-(c^{(\Ga)})^2,\nonum
&~&
\eeqa{2.10-3}
in which $\Gs_1^{(\Ga)}$ and $\Gs_2^{(\Ga)}$ are the values of $\Gs_1(\Bx)$ and $\Gs_2(\Bx)$ in phase $\alpha$.
We will not consider these criteria further, as they are more difficult to treat than the criteria \eq{2.10}. 
\end{remark}

\subsection{Conditions from boundary measurements which guarantee nonlinearities are present}

Consider a two phase isotropic material, in two or three dimensions, with complex conductivity 
\beq
\sigma^{(\alpha)}=\sigma_1^{(\alpha)}+i\sigma_2^{(\alpha)},
\eeq{2.11}
where the superscript $\alpha=1,2$ denotes the phase and the subscript denotes the real and imaginary component of the conductivity. 
In general the complex conductivities $\sigma^{(1)}$ and $\sigma^{(2)}$ depend on the frequency $\Go$. It is helpful
to also introduce the fields
\beq
\BE^{(\alpha)}(\Bx)=\Gc_{\Ga}(\Bx)\BE(\Bx)=(\BE_1^{(\alpha)}(\Bx)+i\BE_2^{(\alpha)}(\Bx)),
\eeq{2.12}
where $\Gc_{\Ga}(\Bx)$ is the characteristic function taking the value $1$ in phase $\Ga$ and zero elsewhere. Our simplified condition for the onset on nonlinearities 
at point $\Bx$ in phase $\Ga$ is given by
\beq |\BE_1^{(\alpha)}(\Bx)|^2+|\BE_2^{(\alpha)}(\Bx)|^2  \geq (c^{(\Ga)})^2. \eeq{2.13}
If this condition is met, then we say that our material has become nonlinear. As observed in \cite{Thaler:2015:BVI} the quantities
\beq \langle\BE_i^{(\Ga)}\rangle,\quad \langle\BE_i\cdot\BJ_k\rangle \eeq{2.14}
 can be determined by  boundary measurements for all $\Ga$, $i$, and $k$. Employing the splitting method as described earlier, the six quantities 
\beq  A_{mn}^{(\alpha)} =
    \langle\BE_m^{(\alpha)}\cdot\BE_n^{(\alpha)}\rangle \quad
    (\text{for} \ \alpha, \ m, \ n = 1, \ 2)
\eeq{2.15}
 are related by four equations, which when solved give
\beq \begin{bmatrix}
    A_{21}^{(1)} \\[0.1cm] A_{21}^{(2)} \\[0.1cm] A_{22}^{(1)}
    \\[0.1cm] A_{22}^{(2)} 
  \end{bmatrix}
  =  \begin{bmatrix} 
    -\sigma_2^{(1)} & -\sigma_2^{(2)} & 0 & 0 \\[0.1cm] \sigma_1^{(1)}
    & \sigma_1^{(2)} & 0 & 0\\[0.1cm] \sigma_1^{(1)} & \sigma_1^{(2)}
    & -\sigma_2^{(1)} & -\sigma_2^{(2)} \\[0.1cm] \sigma_2^{(1)} &
    \sigma_2^{(2)} & \sigma_1^{(1)} & \sigma_1^{(2)}
  \end{bmatrix}^{-1}
\begin{bmatrix} 
    \lla \BE_1\cdot\BJ_1\rra - \sigma_1^{(1)} x - \sigma_1^{(2)} y
    \\[0.1cm] \lla \BE_1\cdot \BJ_2\rra -\sigma_2^{(1)} x -
    \sigma_2^{(2)} y \\[0.1cm] \lla \BE_2\cdot \BJ_1 \rra \\[0.1cm] \lla
    \BE_2 \cdot \BJ_2 \rra 
  \end{bmatrix}.
\eeq{2.16}
in terms of the ``free variables'' $x\equiv A_{11}^{(1)}$ and $y \equiv A_{11}^{(2)}$ 
(assuming $\beta = \sigma_1^{(1)}\sigma_2^{(2)} - \sigma_2^{(1)}\sigma_1^{(2)}\ne 0$). These two free variables $x$ and $y$ cannot however be directly evaluated from boundary
measurements if data are only available at one frequency. If the materials have a linear response everywhere then we have for $\Ga=1,2$,
\beq  A_{11}^{(\Ga)}+A_{22}^{(\Ga)}=\langle \BE_1^{(\alpha)}\cdot\BE_1^{(\alpha)}+\BE_2^{(\alpha)}\cdot\BE_2^{(\alpha)}\rangle\leq f^{\Ga}(c^{(\Ga)})^2, \eeq{2.17}
and by using \eq{2.16} to eliminate $A_{22}^{(1)}$ and $A_{22}^{(2)}$
each of these conditions reduces to a linear inequality in the $(x,y)$ plane. The intersection of the two linear inequalities defines what we call the compatible region in the $(x,y)$ plane. 
 
Following the procedure of Thaler and Milton \cite{Thaler:2015:BVI}, we can use the positivity of the variance, $\langle \ga \cdot \ga \rangle \geq 0$, for all $\mathbf{c}^{(\alpha)} \in \mathbb{R}^2$,
where $\ga$ is given by \eq{g}, to obtain the condition that the matrices $\BS^{\Ga}$ given by \eq{0.6} are positive semidefinite. Making the substitutions \eq{2.16}
and the symmetric matrices $\BS^\Ga$ can be expressed in terms of $x$ and $y$:
\beq
 \begin{aligned}
 \BS^{(1)}(x,y) &:= \begin{bmatrix} x - \dfrac{\|\langle\Eoo\rangle\|^2}{f^{(1)}} & S_{21}^{(1)}(x,y) \\ S_{21}^{(1)}(x,y) & -x + \etaone - \dfrac{\|\langle\Eto\rangle\|^2}{f^{(1)}} \end{bmatrix}, \\[0.1cm]
 \BS^{(2)}(x,y) &:= \begin{bmatrix} y - \dfrac{\|\langle\Eot\rangle\|^2}{f^{(2)}} & S_{21}^{(2)}(x,y) \\ S_{21}^{(2)}(x,y) & -y + \etatwo - \dfrac{\|\langle\Ett\rangle\|^2}{f^{(2)}} \end{bmatrix}, \\
 \end{aligned} 
\eeq{2.17a}
where 
 \[
 \begin{aligned}
 S_{21}^{(1)}&(x,y) = -\gamma x - \psione y + \xione - \dfrac{\langle\Eoo\rangle\cdot\langle\Eto\rangle}{f^{(1)}} ;\\
 S_{21}^{(2)}&(x,y) =  \psitwo x + \gamma y  - \xitwo - \dfrac{\langle\Eot\rangle\cdot\langle\Ett\rangle}{f^{(2)}} ; \\
 \beta &= \sigma_1^{(1)}\sigma_2^{(2)} - \sigma_2^{(1)}\sigma_1^{(2)} ; \quad
 \gamma = \frac{\sigma_1^{(1)} \sigma_1^{(2)} + \sigma_2^{(1)} \sigma_2^{(2)}}{\beta} ; \quad
 \psione = \frac{\left|\sigma^{(2)}\right|^2}{\beta} ; \quad 
 \psitwo = \frac{\left|\sigma^{(1)}\right|^2}{\beta} ;\\
 \xione  &= \dfrac{\sigma_2^{(2)} \lla \BE_1\cdot\BJ_2\rra + \sigma_1^{(2)} \lla \BE_1\cdot \BJ_1\rra}{\beta} ; \quad
 \xitwo = \dfrac{\sigma_2^{(1)} \lla \BE_1\cdot\BJ_2\rra + \sigma_1^{(1)} \lla\BE_1\cdot \BJ_1\rra}{\beta} ; \\
 \etaone &= \dfrac{\sigma_1^{(2)}\left(\lla\BE_2\cdot\BJ_1\rra - \lla\BE_1\cdot\BJ_2\rra\right) + \sigma_2^{(2)}\left(\lla\BE_1\cdot\BJ_1\rra + \lla\BE_2\cdot\BJ_2\rra\right)}{\beta} ; \\
 \etatwo &= \dfrac{\sigma_1^{(1)} \left(\lla\BE_1\cdot\BJ_2\rra - \lla\BE_2\cdot\BJ_1\rra\right) - \sigma_2^{(1)}\left(\lla\BE_1\cdot\BJ_1\rra + \lla\BE_2\cdot\BJ_2\rra\right)}{\beta} .
 \end{aligned}
\]

The constraint that the matrices $\BS^{(\alpha)}$ must be positive semidefinite confines the pair $(x,y)$ to lie within a region which is the intersection of the ellipse
$\det [\BS^{(1)}(x,y)]\geq 0$ with the ellipse $\det [\BS^{(2)}(x,y)]\geq 0$. We call this region of intersection the feasible region. If it is empty, or does not intersect the
compatible region, then one or both of the materials must have become nonlinear somewhere (see Figure 2).

We have outlined the proof of the following theorem 

\begin{theorem}
	Suppose that $\beta = \sigma_1^{(1)}\sigma_2^{(2)} - \sigma_2^{(1)}\sigma_1^{(2)}\ne 0$, and that the volume fractions $f_{\Ga}$ are both nonzero and known. Defining $x$ and $y$ as above, if the material has not experienced the onset of nonlinearities, then it is necessary that the region of intersection of the two 
ellipses in the $x-y$ plane given by the constraints on $\BS^{(\alpha)}$ must be nonempty and intersect the compatible region.
\end{theorem}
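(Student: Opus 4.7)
The plan is to assemble the three ingredients laid out in the discussion preceding the theorem: (i) the splitting identities, which reduce the six phase-resolved moments $A_{mn}^{(\Ga)}$ to a two-parameter family indexed by $x$ and $y$; (ii) the nonlinearity criterion \eq{2.13}, which upon integration over $\GO$ yields the compatible-region inequalities; and (iii) the variance inequality $\langle \ga\cdot\ga\rangle\ge 0$, which forces each $\BS^{(\Ga)}$ to be positive semidefinite and so carves out the feasible region.

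First, I would use the observation from \cite{Thaler:2015:BVI} that every boundary-measurable pairing $\lla \BE_i\cdot\BJ_k\rra$ splits as $\sigma^{(1)} A_{ik}^{(1)}+\sigma^{(2)} A_{ik}^{(2)}$. Separating real and imaginary parts produces a linear system of four real equations in the six unknowns $A_{mn}^{(\Ga)}$. The $4\times 4$ coefficient minor acting on $(A_{21}^{(1)},A_{21}^{(2)},A_{22}^{(1)},A_{22}^{(2)})$ has determinant proportional to $\beta^2$, so the hypothesis $\beta\ne 0$ lets me solve for these four quantities in terms of the two free variables $x=A_{11}^{(1)}$, $y=A_{11}^{(2)}$, giving precisely the relation \eq{2.16}.

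Second, I would observe that if no point of $\GO$ satisfies \eq{2.13}, then after multiplying that pointwise bound by $\chi_\Ga$ and averaging over $\GO$ one obtains $A_{11}^{(\Ga)}+A_{22}^{(\Ga)}\le f^{(\Ga)}(c^{(\Ga)})^2$ for $\Ga=1,2$. Using \eq{2.16} to eliminate $A_{22}^{(1)}$ and $A_{22}^{(2)}$ rewrites each bound as an affine inequality in $(x,y)$, cutting out the compatible region. Third, I would apply the variance identity $\langle \ga\cdot\ga\rangle = (\mathbf{c}^{(\Ga)})^T \BS^{(\Ga)}\mathbf{c}^{(\Ga)}$, with $\ga$ as in \eq{g} and $\BS^{(\Ga)}$ as in \eq{0.6}; nonnegativity for all $\mathbf{c}^{(\Ga)}\in\RR^2$ is equivalent to $\BS^{(\Ga)}\succeq 0$. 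Substituting \eq{2.16} into the entries of \eq{0.6} produces the explicit form \eq{2.17a}, and the two conditions $\det\BS^{(\Ga)}(x,y)\ge 0$ together with the diagonal nonnegativity define the feasible region in the $(x,y)$-plane. Since the physically realized pair $(x,y)$ lies in both the compatible and feasible regions, these regions must intersect, which is the desired necessary condition.

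The main obstacle will be purely algebraic: carefully substituting the intricate expressions \eq{2.16} into \eq{0.6} and verifying that the entries reorganize into the specific form \eq{2.17a}, with the auxiliary quantities $\gamma,\psi^{(\Ga)},\xi^{(\Ga)},\eta^{(\Ga)}$ appearing exactly as advertised. A secondary point worth checking is that each level set $\{\det\BS^{(\Ga)}(x,y)=0\}$ is genuinely an ellipse rather than a degenerate or hyperbolic conic, so that the feasible region is a bounded intersection of elliptical disks as pictured in Figure 2; this requires examining the sign of the discriminant of the quadratic $\det\BS^{(\Ga)}$ in $(x,y)$, which is controlled by $\gamma,\psi^{(\Ga)}$ and hence again by $\beta\ne 0$. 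Once these verifications are carried out, the theorem follows immediately from the contrapositive of the three necessary conditions just assembled.
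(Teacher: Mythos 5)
Your proposal follows exactly the paper's own (outlined) argument: the splitting identities with $\beta\ne 0$ yield \eq{2.16}, averaging the pointwise criterion \eq{2.13} gives the compatible region \eq{2.17}, and positivity of the variance $\langle \ga\cdot\ga\rangle\ge 0$ forces $\BS^{(\alpha)}(x,y)\succeq 0$, so the physically realized $(x,y)$ must lie in both regions. This matches the paper's reasoning step for step, so there is nothing further to add.
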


\begin{figure}[htp]
	\centering
	\begin{subfigure}[b]{0.3\textwidth}
		\includegraphics[width=1.3\textwidth]{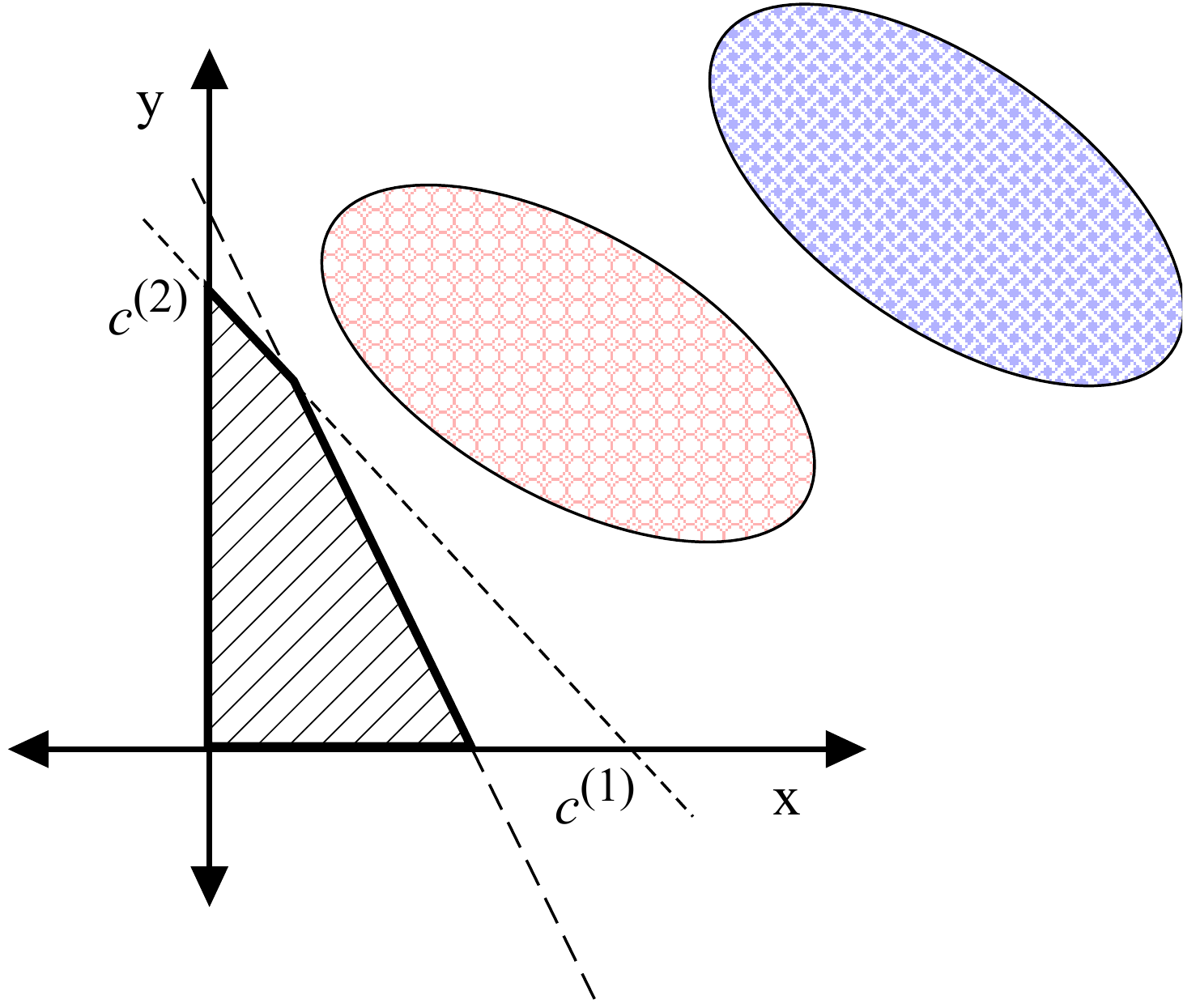}
		\caption{}
	\end{subfigure}\hspace{60 pt}
		\begin{subfigure}[b]{0.3\textwidth}
			\includegraphics[width=1\textwidth]{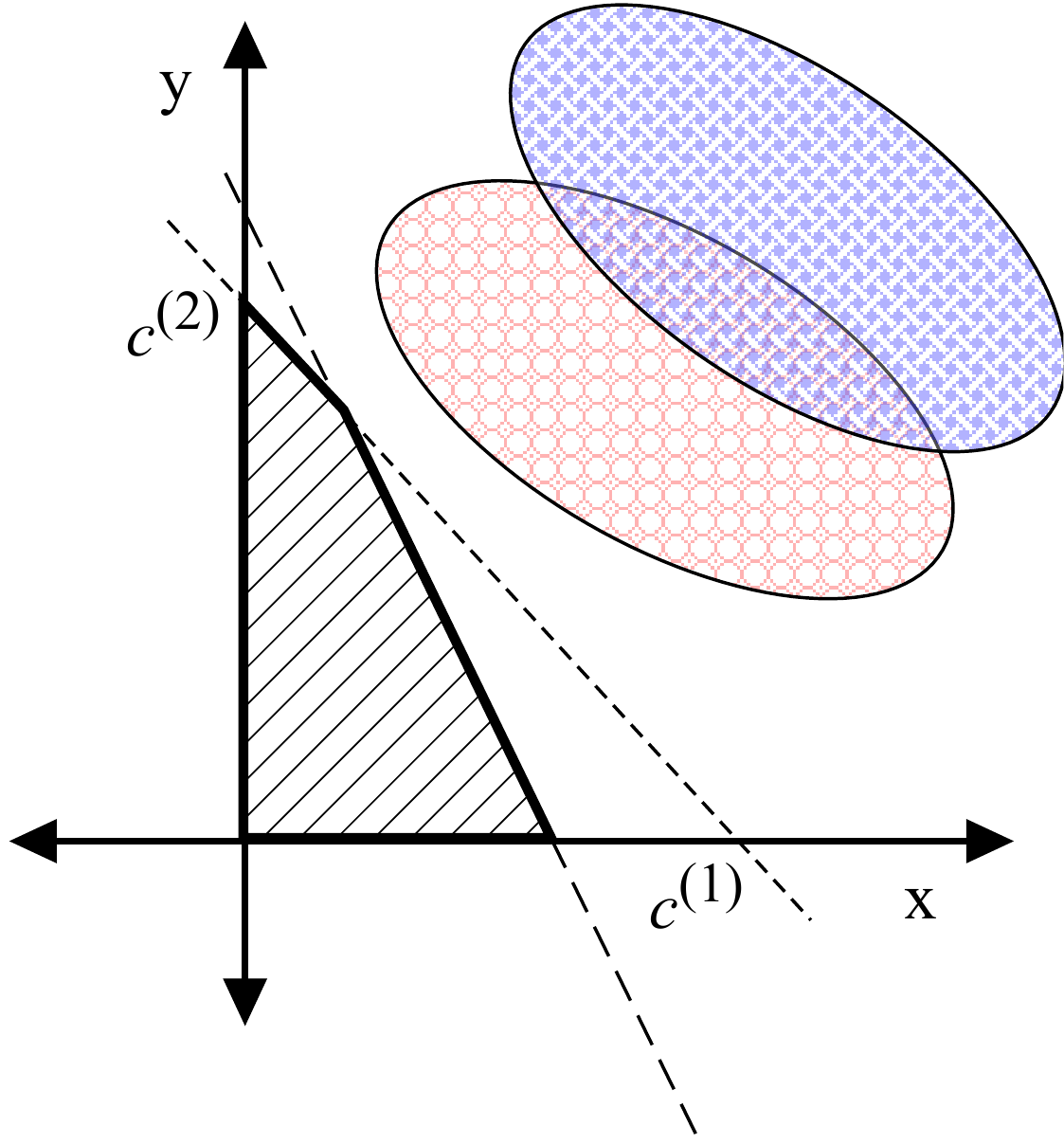}
			\caption{}
		\end{subfigure}
		\caption{The material must have experienced nonlinearities if either (a) the feasible region (which is the region of intersection 
of the two ellipses) is empty or (b) The feasible region does not intersect the compatible region (which is the polygonal region shaded with black lines).}.
	\label{fig:Guaranteed}
\end{figure}

\begin{figure}[htp]
	\centering
	\includegraphics[width=0.50\textwidth]{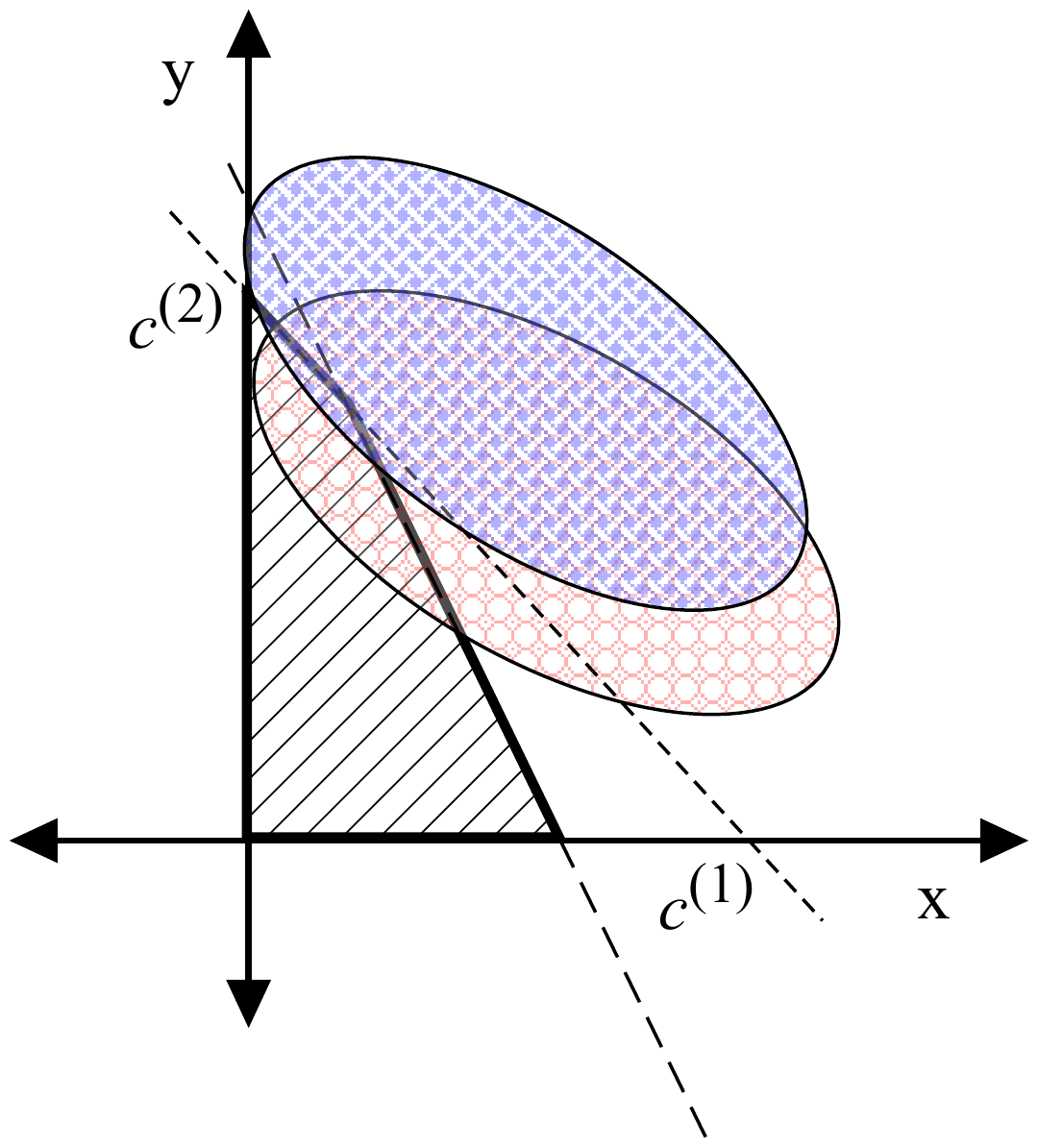}
	\caption{Here the feasible region intersects the compatible region, so the material may, or may not, have experienced the onset of nonlinearities depending on the internal geometry of the body.}
	\label{fig:Possible}
\end{figure}

Using the additional null-Lagrangians $\BE_1 \cdot \BR_\perp \BE_2$ and $\BJ_1 \cdot \BR_\perp \BJ_2$, as described in section \ref{sec:ImprovedBounds} and as in reference \cite{Thaler:2015:BVI}, 
the feasible region is reduced to the region in the $(x,y)$ plane which is the intersection of the
two ellipses  
\begin{equation}
\det[\BS^{(1)}(x,y)]\geq \Gj^{(1)}\quad{\rm and}\quad \det[\BS^{(2)}(x,y)]\geq \Gj^{(2)},
\label{eq:TighterBounds}
\end{equation}
where $\Gj^{(\alpha)}$ is given by \eq{2.19}. Here we note that it is necessary but not sufficient to say that if the material has not experienced the onset of nonlinearities then the feasible region must be nonempty and intersect the compatible region. 

\begin{theorem}
	Suppose that $\beta = \sigma_1^{(1)}\sigma_2^{(2)} - \sigma_2^{(1)}\sigma_1^{(2)}\ne 0$, and that the volume fractions $f^{(\Ga)}$ are both nonzero and known. Defining $x$ and $y$ as above, if the material has not experienced the onset of nonlinearities, then it is necessary that the region of intersection (in the $x-y$ plane) of the two ellipses given by (\ref{eq:TighterBounds}) is nonempty and intersects the compatible region given by \eq{2.17}. 
\end{theorem}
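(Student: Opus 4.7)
The plan is to assemble three ingredients developed in this subsection: the splitting identity for the second moments $A_{mn}^{(\Ga)}$, the pointwise ``no onset of nonlinearities'' bound, and the refined variance-positivity argument built from $\ha$ rather than $\ga$.

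First, under the hypothesis $\beta \ne 0$ the splitting identity \eq{2.16} expresses each of $A_{21}^{(1)}$, $A_{21}^{(2)}$, $A_{22}^{(1)}$, $A_{22}^{(2)}$ as an explicit affine function of the free variables $x = A_{11}^{(1)}$ and $y = A_{11}^{(2)}$, with coefficients determined by the boundary-measurable power bilinear forms $\lang \BE_k \cdot \BJ_l \rang$. This step is identical to the one used in the preceding theorem and carries over without modification.

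Second, if nonlinearities have not been triggered anywhere in phase $\Ga$, then $|\BE_1^{(\Ga)}(\Bx)|^2 + |\BE_2^{(\Ga)}(\Bx)|^2 < (\ca)^2$ pointwise on $\GO_\Ga$; integrating over $\GO$ yields \eq{2.17}, i.e.\ $A_{11}^{(\Ga)} + A_{22}^{(\Ga)} \le f^{(\Ga)}(\ca)^2$ for $\Ga = 1, 2$. After substituting the expressions for $A_{22}^{(\Ga)}$ from step one, each inequality becomes linear in $(x,y)$, and their intersection defines the compatible region.

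Third, I invoke the splitting identity \eq{2.18a}---valid under the implicit hypothesis $|\sigma^{(1)}| \ne |\sigma^{(2)}|$---to recover the null-Lagrangian averages $B_{12}^{(\Ga)} = \lang \BE_1^{(\Ga)} \cdot \Rp \BE_2^{(\Ga)} \rang$ from the boundary-measurable quantities $\lang \BE_1 \cdot \Rp \BE_2 \rang$ and $\lang \BJ_1 \cdot \Rp \BJ_2 \rang$. With the $B_{12}^{(\Ga)}$ in hand, nonnegativity of $\lang \ha \cdot \ha \rang$ for all $\mathbf{c}^{(\Ga)}, \mathbf{d}^{(\Ga)} \in \mathbb{R}^2$ is equivalent to positive semidefiniteness of a $4 \times 4$ covariance matrix for each phase. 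A Schur-complement reduction with respect to the $\Rp$-block, carried out in \cite{Thaler:2015:BVI}, converts this into the improved determinantal bound $\det[\Sa(x,y)] \ge \Gj^{(\Ga)}$ with $\Gj^{(\Ga)}$ given by \eq{2.19}, and hence itself boundary-measurable. These are precisely the two elliptical constraints appearing in \eq{eq:TighterBounds}.

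Combining these, any $(x,y)$ compatible with a nonlinearity-free configuration must lie simultaneously in the compatible region from step two and in the refined feasible region from step three, so the intersection must be nonempty. The main technical difficulty is the Schur-complement manipulation in step three, which packages the extra null-Lagrangian information into the additive correction $\Gj^{(\Ga)}$; since this is exactly the computation already performed in \cite{Thaler:2015:BVI}, I would quote it rather than rederive it, making the overall argument rather short.
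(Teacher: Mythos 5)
Your proposal is correct and follows essentially the same route as the paper: the splitting identity \eq{2.16} to parametrize the $A_{mn}^{(\Ga)}$ by $(x,y)$, integration of the pointwise criterion to get the compatible region \eq{2.17}, and the $\ha$-variance argument (quoting \cite{Thaler:2015:BVI} for the reduction to $\det[\BS^{(\Ga)}]\ge\Gj^{(\Ga)}$ via \eq{2.18a} and \eq{2.19}) to get the reduced feasible region, with the conclusion following because the true $(x,y)$ must lie in both sets. Your explicit flagging of the implicit hypothesis $|\sigma^{(1)}|\ne|\sigma^{(2)}|$ needed for \eq{2.18a} is a point the theorem statement itself glosses over, but otherwise the argument matches the paper's outline.
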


\begin{figure}[htp]
	\centering
	\includegraphics[width=0.50\textwidth]{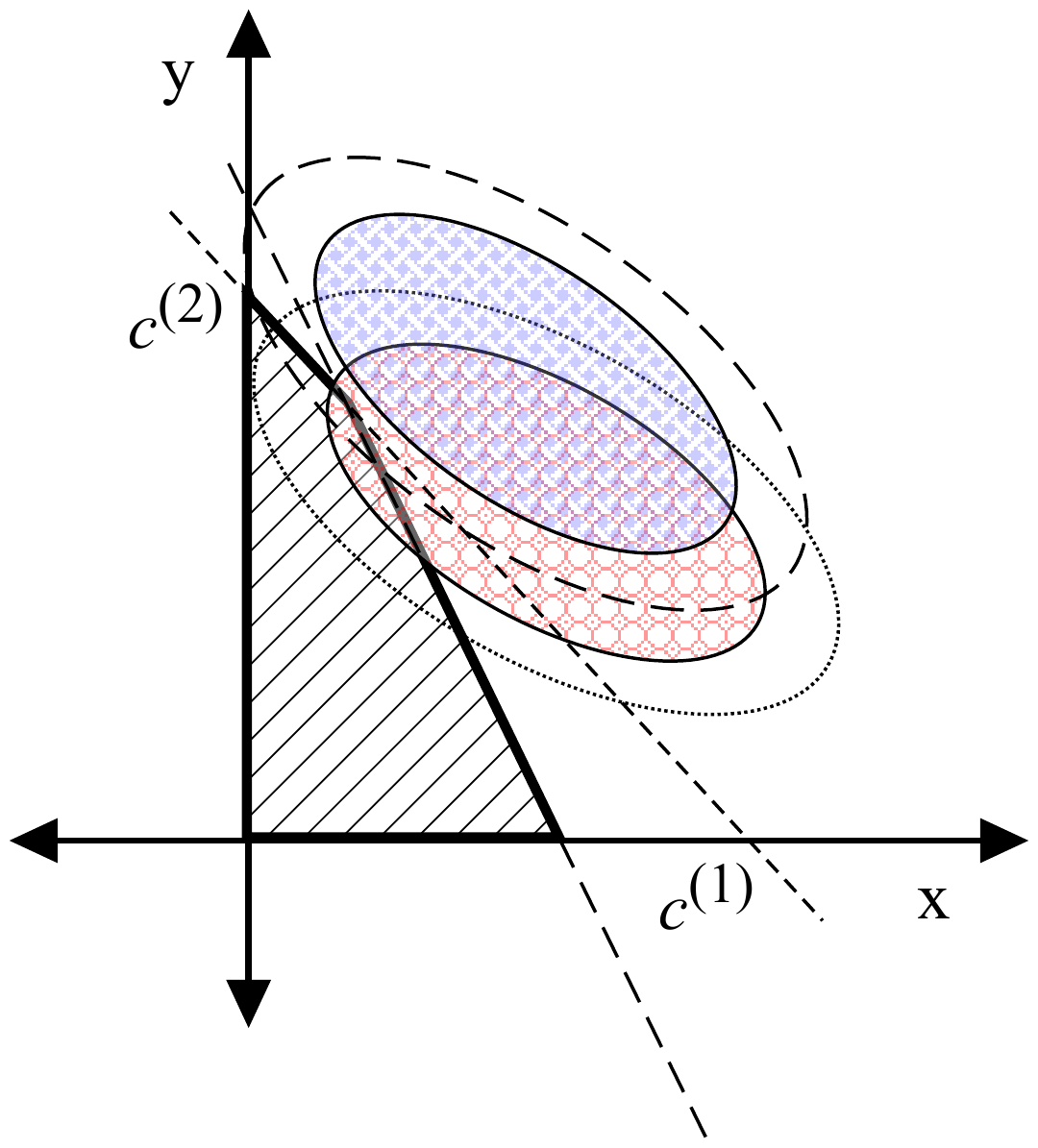}
	\caption{The dashed and dotted elliptical boundaries mark the previous bounds as in figure \ref{fig:Possible}, while the solid shaded portions show the improved bounds. The intersection of the compatible region and the feasible region is now empty so it is guaranteed that the material has experienced the onset of nonlinearities.}
	\label{fig:Improved}
\end{figure}

%%%%%%%%%%%%%%%%%%%%%%%%%%%%%%%%%%%%%%%%%%%%%%%%%%%%%%%%%%%%%%%%%%%%%%%%%
\section{Two-dimensional Elasticity}
\setcounter{equation}{0}
In two-dimensional linear elasticity (applicable to plane stress or plane strain problems) with isotropic constituents the constitutive equation takes the form
\beq \BGj=\BC\Grad\Bu=\Gm[\Grad\Bu+(\Grad\Bu)^T]+(\Gk-\Gm)\Tr(\Grad\Bu)\BI,
\eeq{5.1}
where $\BGj(\Bx)$ is the $2\times 2$ symmetric matrix valued stress, $\BC(\Bx)$ is the elasticity tensor,
$\Grad\Bu(\Bx)$ is the $2\times 2$ matrix valued displacement field gradient, and $\Gm(\Bx)$ and $\Gk(\Bx)$ are the local shear and bulk moduli. On the space of $2\times 2$ matrices it is convenient to introduce the basis
\beq B=\frac{1}{\sqrt{2}}(\begin{bmatrix} 0 & 1 \\ -1 & 0 \end{bmatrix}, \begin{bmatrix} 1 & 0 \\ 0 & 1 \end{bmatrix}, \begin{bmatrix} 1 & 0 \\ 0 & -1 \end{bmatrix}, \begin{bmatrix} 0 & 1 \\ 1 & 0 \end{bmatrix}). \eeq{5.11}
In this basis the stress $\BGj(\Bx)$ is represented by the vector field
\beq\BGj = ( 0, \tau_{1}, \tau_{2}, \tau_{3}), \eeq{5.12}
where the first element is zero because the stress matrix is symmetric. The displacement field gradient is represented by the vector field
\beq \nabla \Bu=(F_{0},\epsilon_{1},\epsilon_{2},\epsilon_{3}), \eeq{5.15}
where $F_{0}$ is proportional to the antisymmetric part of $\nabla\Bu$, corresponding to the local rotation, and $\epsilon_{1}$, $\epsilon_{2}$, and $\epsilon_{3}$ just depend on the
symmetric part of $\Grad\Bu$, which is the strain $\BGe(\Bx)=[\Grad\Bu+(\Grad\Bu)^T]/2$.

If the stress is too large, then nonlinear effects may become important. In particular if the stress is sufficiently large the material may undergo plastic yielding. For simplicity
we assume the response of the material is linear until it undergoes plastic yielding and we seek to determine boundary values of the displacement $\Bu$ and traction
$\BGj\Bn$ which if observed would necessarily imply that the material inside the body had yielded. (Without this assumption these boundary values would imply that the material is no
longer correctly modelled by the linear elasticity equations).
 
There are multiple yield criteria that have proven to be useful. In three dimensional elasticity these
criteria depend on the three eigenvalues $\Gs_1$, $\Gs_2$ and $\Gs_3$ of the $3\times 3$ symmetric matrix valued stress: if these eigenvalues lie
inside the yield surface the material will not have yielded, otherwise it will have yielded, or is at the threshold of yielding. 
Among the simplest models for the yield surface are the Von Mises Yield surface
\beq (\sigma_{1}-\sigma_{2})^{2}+(\sigma_{2}-\sigma_{3})^{2}+(\sigma_{3}-\sigma_{1})^{2}={\rm constant}, \eeq{5.3}
and the Tresca Yield Surface
\beq \max\{|\sigma_{1}-\sigma_{2}|,|\sigma_{2}-\sigma_{3}|,|\sigma_{3}-\sigma_{1}|\}={\rm constant}. \eeq{5.4}
We only consider two dimensional elasticity, so \eq{5.3} and \eq{5.4} reduce to
\beq |\sigma_{1}-\sigma_{2}|={\rm constant}, \eeq{5.5}
where $\Gs_1$ and $\Gs_2$ are the eigenvalues of the $2\times 2$ symmetric matrix valued stress $\BGj$.
By rotating the stress matrix $\BGj$ at a given point so it is diagonal and takes the form
\beqa \BGj & = &  \begin{bmatrix} \sigma_{1} & 0 \\ 0 & \sigma_{2} \end{bmatrix} \nonum
           & = &  \frac{\sigma_{1}+\sigma_{2}}{2} \begin{bmatrix} 1 & 0 \\ 0 & 1 \end{bmatrix} + \frac{\sigma_{1}-\sigma_{2}}{2} \begin{bmatrix} 1 & 0 \\ 0 & -1 \end{bmatrix}, \eeqa{5.13}
we see that (since $\tau_{3}=0$) 
\beq \frac{(\sigma_{1}-\sigma_{2})^{2}}{2}=\tau_{2}^{2}+\tau_{3}^{2}. \eeq{5.14}
As the right hand side remains invariant as the matrix $\BGj$ is rotated this expression is valid even if $\BGj$ is not diagonal
and so the Von Mises-Tresca criterion becomes 
\beq \tau_{2}^{2}+\tau_{3}^{2}={\rm constant}. \eeq{5.14a}

To determine conditions which necessarily imply yield has occurred, we will use the splitting method used by Milton and Nguyen
\cite{Milton:2011:BVF}. They note that the quantities
\beq E = \langle \BGj \cdot \nabla \Bu \rangle,\quad \BGj_{0}=\langle \BGj \rangle, \quad \langle \nabla \Bu \rangle, \quad a=\langle \det \BGj \rangle, \quad 
b=\langle \det \nabla \Bu \rangle, \eeq{5.10}
can all be evaluated from boundary measurements, using integration by parts. In the basis \eq{5.11} the expressions for $a$ and $b$ become
\beq a=\frac{1}{2}\langle \tau_{1}^{2}-\tau_{2}^{2}-\tau_{3}^{2} \rangle,\quad b=\frac{1}{2}\langle F_{0}^{2}+\epsilon_{1}^{2}-\epsilon_{2}^{2}-\epsilon_{3}^{2}\rangle. \eeq{5.16}
Since not much can be said about $\langle F_{0}^{2}\rangle$ other than it being not less than $\langle F_{0}\rangle^2$, it is useful to introduce the additional
quantity
\beqa c\equiv b-\frac{1}{2}\langle F_{0} \rangle ^{2} \geq b-\frac{1}{2}\langle F_{0}^{2} \rangle=\frac{1}{2}\langle \epsilon_{1}^{2}-\epsilon_{2}^{2}-\epsilon_{3}^{2}\rangle, \eeqa{5.17}
which can also be determined from boundary measurements.
The inequality here becomes an equality if and only if $F_{0}$ is constant everywhere.

The total elastic energy $E$ can be subdivided into separate quantities for each phase and according to whether it is a bulk or shear energy component:
\beqa E_{1b}=\langle \chi_{1}\tau_{1}\epsilon_{1} \rangle=2\kappa_{1}\langle\chi_{1}\epsilon_{1}^{2}\rangle, \eeqa{5.18}
\beqa E_{2b}=\langle \chi_{2}\tau_{1}\epsilon_{1} \rangle=2\kappa_{2}\langle\chi_{2}\epsilon_{1}^{2}\rangle, \eeqa{5.19}
\beqa E_{1s}=\langle \chi_{1}(\tau_{2}\epsilon_{2}+\tau_{3}\epsilon_{3}) \rangle=2\mu_{1}\langle\chi_{1}(\epsilon_{2}^{2}+\epsilon_{3}^{2})\rangle, \eeqa{5.20}
\beqa E_{2s}=\langle \chi_{2}(\tau_{2}\epsilon_{2}+\tau_{3}\epsilon_{3}) \rangle=2\mu_{2}\langle\chi_{2}(\epsilon_{2}^{2}+\epsilon_{3}^{2})\rangle, \eeqa{5.21}
where $\chi$ is the indicator function for each phase, numbers denote the phase, $b$ denotes bulk component, $s$ denotes the shear component, $\kappa$ is the bulk modulus, and $\mu$ is the shear modulus. These quantities cannot individually be determined from boundary measurements, but Milton and Nguyen \cite{Milton:2011:BVF} correlate them through inequalities. 

From \eq{5.18}-\eq{5.21}, we obtain
\beq E=E_{1b}+E_{2b}+E_{1s}+E_{2s}, \eeq{5.22}
\beq a=\kappa_{1}E_{1b}+\kappa_{2}E_{2b}-\mu_{1}E_{1s}-\mu_{2}E_{2s}, \eeq{5.23}
\beq c \geq \frac{E_{1b}}{4\kappa_{1}}+\frac{E_{2b}}{4\kappa_{2}}-\frac{E_{1s}}{4\mu_{1}}-\frac{E_{2s}}{4\mu_{2}}. \eeq{5.24}

Further inequalities can be obtained using positivity of the variances
\beq \langle(\chi_1 \varepsilon_i - \frac{\chi_1}{f_1}\langle \chi_1\varepsilon_k\rangle)^2\rangle \geq 0,\quad
 \langle(\chi_2 \varepsilon_i - \frac{\chi_2}{f_1}\langle \chi_2\varepsilon_k\rangle)^2\rangle \geq 0,
\eeq{5.24a}
which imply
\beq E_{1b}\geq\frac{A_{1b}}{f_{1}}, \eeq{5.32}
\beq E_{2b}\geq\frac{A_{2b}}{f_{2}}, \eeq{5.33}
\beq E_{1s}\geq\frac{A_{1s}}{f_{1}}, \eeq{5.34}
\beq E_{2s}\geq\frac{A_{2s}}{f_{2}}, \eeq{5.35}
where $f_1$ and $f_2$ are the volume fractions of each phase, and
\beq A_{1b}=2\kappa_{1}\langle\chi_{1}\epsilon_{1}\rangle^{2}, \eeq{5.27}
\beq A_{2b}=2\kappa_{2}\langle\chi_{2}\epsilon_{1}\rangle^{2}, \eeq{5.28}
\beq A_{1s}=2\mu_{1}(\langle\chi_{1}\epsilon_{2}\rangle^2+\langle\chi_{1}\epsilon_{3}\rangle^{2}), \eeq{5.29}
\beq A_{2s}=2\mu_{2}(\langle\chi_{2}\epsilon_{2}\rangle^{2}+\langle\chi_{2}\epsilon_{3}\rangle^{2}). \eeq{5.30}
These four quantities can be determined from the known values of $\langle \BGj \rangle$ and $\langle \nabla \Bu \rangle$
using the relations 
\beqa
	\langle \chi_1\varepsilon_1\rangle & = & \frac{1}{2(\kappa_2 - \kappa_1)}(2\kappa_2\langle\varepsilon_1\rangle - \langle \sigma_1\rangle), \quad \langle \chi_2\varepsilon_1\rangle = \frac{1}{2(\kappa_1 - \kappa_2)}(2\kappa_1\langle\varepsilon_1\rangle - \langle \sigma_1
	\rangle), \nonum
	\langle\chi_1 \varepsilon_j\rangle & = & \frac{1}{2(\mu_2 - \mu_1)}(2\mu_2\langle \varepsilon_j\rangle - \langle \sigma_j\rangle), \quad \langle\chi_1 \varepsilon_j\rangle = \frac{1}{2(\mu_2 - \mu_1)}(2\mu_2\langle \varepsilon_j\rangle - \langle \sigma_j\rangle), \quad j = 2, 3. \nonum &~ &
\eeqa{5.30a}

For this analysis, we focus our view on phase one.
Solving \eq{5.22} and \eq{5.23} for $E_{2b}$ and $E_{2s}$ yields
\beq E_{2b}=\frac{a+E\mu_{2}-E_{1b}(\kappa_{1}+\mu_{2})+E_{1s}(\mu_{1}-\mu_{2})}{\kappa_{2}+\mu_{2}},\eeq{5.25}
and
\beq E_{2s}=\frac{(\Gk_1-\Gk_2)E_{1b}-(\Gm_1+\Gk_2)E_{1s}-a+E\Gk_{2}}{\kappa_{2}+\mu_{2}}.\eeq{5.26}
Plugging \eq{5.25} and \eq{5.26} into \eq{5.33} and \eq{5.35} gives
\beq
\frac{a+E\mu_{2}-E_{1b}(\kappa_{1}+\mu_{2})+E_{1s}(\mu_{1}-\mu_{2})}{\kappa_{2}+\mu_{2}}
\geq\frac{A_{2b}}{f_{2}},
\eeq{5.36}
and
\beq \frac{(\Gk_1-\Gk_2)E_{1b}-(\Gm_1+\Gk_2)E_{1s}-a+E\Gk_{2}}{\kappa_{2}+\mu_{2}}
\geq\frac{A_{2s}}{f_{2}}, \eeq{5.37}
and \eq{5.24} becomes
\beq 4\kappa_{2}\mu_{2}c\geq E(\mu_{2}-\kappa_{2})-\frac{E_{1b}}{\kappa_{1}}(\mu_{2}+\kappa_{1})(\kappa_{1}-\kappa_{2})+\frac{E_{1s}}{\mu_{1}}(\mu_{1}+\kappa_{2})(\mu_{1}-\mu_{2})+a. \eeq{5.38}
So \eq{5.32}, \eq{5.34}, \eq{5.36}, \eq{5.37}, and \eq{5.38} bound a feasible region in the $(E_{1b}, E_{1s})$ plane that, in the case $\Gm_1>\Gm_2$ 
and $\Gk_1>\Gk_2$, might resemble Figure 5.
\begin{figure}[htp]
	\centering
	\includegraphics[width=0.50\textwidth]{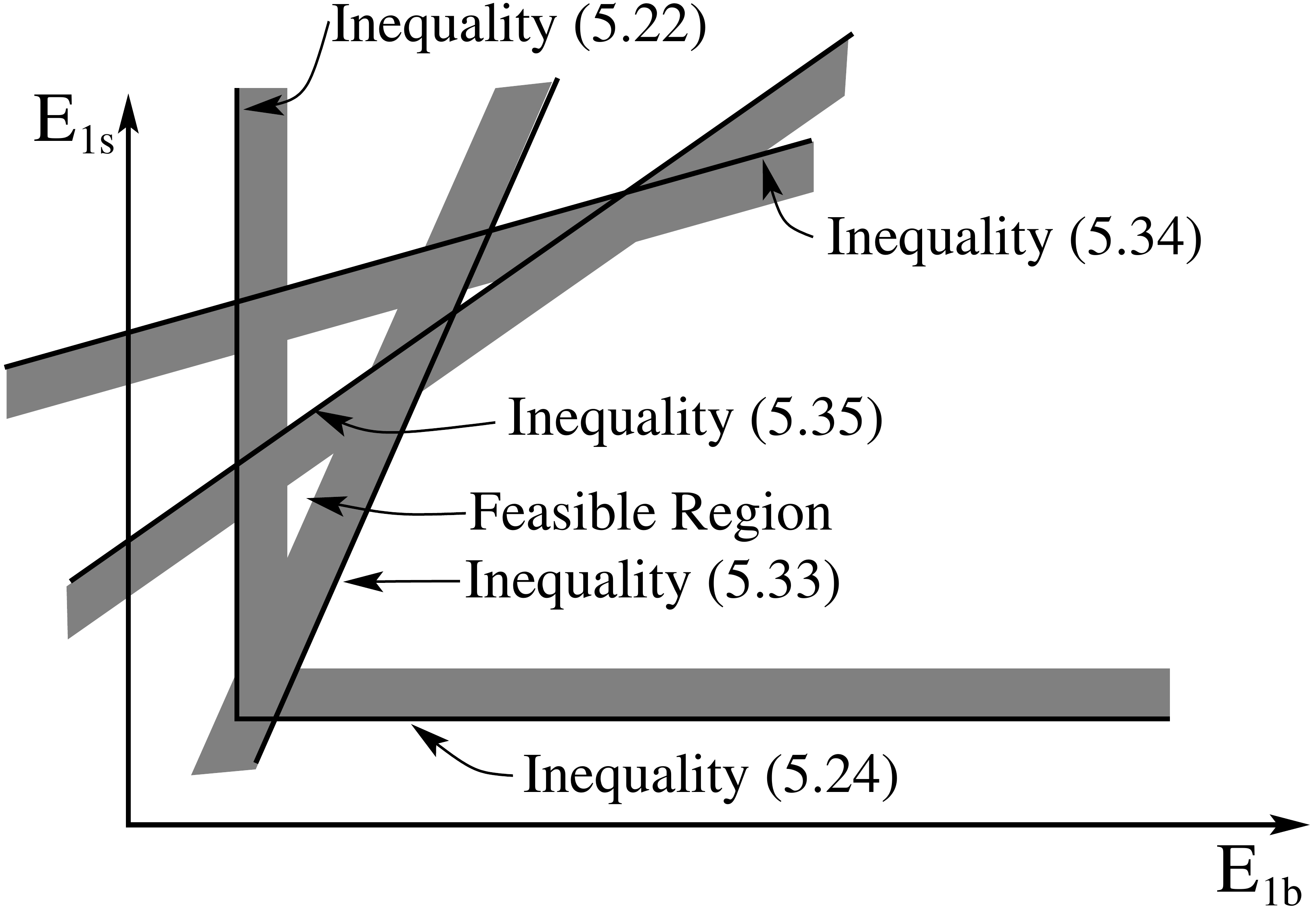}
	\caption{The feasible region}
	\label{fig:elasticity}
\end{figure}

If the material has not yielded the Von Mises-Tresca criterion \eq{5.14a} implies 
\beq \Ge_{2}^{2}+\Ge_{3}^{2}\leq k_1, \eeq{5.38a}
at each point in phase 1, where the threshold $k_1$ depends on the material properties of phase 1. This implies
\beq E_{1s}\leq 2\Gm_1f_1k_1 \eeq{5.38b}
which defines a region in the $(E_{1b}, E_{1s})$ plane that is compatible with the Von Mises-Tresca criterion. Other yield criteria
would yield different compatible regions in the $(E_{1b}, E_{1s})$ plane. If the feasible region does not intersect the compatible region
then the body must have yielded. (One cannot conclude that it is phase 1, rather than phase 2 which has yielded because if either phase
yields, the linear equations of elasticity no longer apply). A similar analysis applies to phase 2, by switching the subscripts 1 and 2.  

If we have additional information about the response of the body to slowly oscillating boundary displacement fields then
the feasible region can be reduced down to a point. If the displacement field at the boundary is $\Bu_0$ without the oscillations,
let it be the real part of $e^{i\Go t}\Bu_0$ with the oscillations, where $t$ is the time and $\Go$ is the frequency, which is small. We can
forget about the factor of $e^{i\Go t}$ since this will factor out of all equations, due to linearity. Thus, mathematically
the boundary displacement can be kept at $\Bu_0$ but the internal fields and the moduli will  become complex due to viscoelasticity.
If the frequency is low, we can use the quasistatic elasticity equations, and the elasticity tensor will be perturbed from $\BC(\Bx)$ to $\BC(\Bx)+\Gd\BC(\Bx)$ while the internal displacement field will be perturbed from $\Bu(\Bx)$ to $\Bu(\Bx)+\Gd\Bu(\Bx)$,  where $\Gd\BC(\Bx)$ and $\Gd\Bu(\Bx)$ are small and complex. Introducing
the strain $\BGe=[\Grad\Bu+(\Grad\Bu)^T]/2$ and its complex perturbation $\Gd\BGe$, we first note that with the perturbation
$$\int_{\Omega}\BGe:\BC\BGe \quad {\rm changes~to} \quad \int_{\Omega}(\BGe+\delta\BGe):(\BC+\delta \BC)(\BGe+ \delta\BGe),$$
and both these quantities can be obtained, using integration by parts, from the boundary values of $\Bu$ and $\BGj\Bn$. To second order in the perturbation we have
\beq \int_{\Omega}(\BGe+\delta\BGe):(\BC+\delta \BC)(\BGe+ \delta\BGe) \approx \int_{\Omega}\BGe:\BC\BGe + 2\int_{\Omega}\delta\BGe:\BC\BGe + \int_{\Omega}\BGe:\delta\BC\BGe, \eeq{5.39}
and
\beqa \int_{\Omega}\delta\BGe:\BC\BGe= \int_{\delta\Omega}\delta \Bu\cdot(\BGj\Bn)dS = 0, \eeqa{5.40} 
since $\delta \Bu = 0$ on $\Md\Omega$. So the quantity
\beqa \Gd E=\frac{1}{\mid\Omega\mid}\int\epsilon\delta C\epsilon & = & 2\delta\kappa_{1}\langle\chi_{1}\epsilon_{1}^{2}\rangle+2\delta\mu_{1}\langle\chi_{2}(\epsilon_{2}^{2}+\epsilon_{3}^{2})\rangle+2\delta\kappa_{2}\langle\chi_{2}\epsilon_{1}^{2}\rangle+2\delta\mu_{2}\langle\chi_{2}(\epsilon_{2}^{2}+\epsilon_{3}^{2})\rangle \nonum
&=& 2\frac{\delta\kappa_{1}}{\kappa_{1}}E_{1b}+2\frac{\delta\mu_{1}}{\mu_{1}}E_{1s}+2\frac{\delta\kappa_{2}}{\kappa_{2}}E_{2b}+2\frac{\delta\mu_{2}}{\mu_{2}}E_{2s} \eeqa{5.41}
can be approximately determined from boundary measurements. If $\delta\kappa_{1}$, $\delta\mu_{1}$, $\delta\kappa_{2}$, $\delta\mu_{2}$ are complex, then
\beq \Real\Gd E=2\Real(\frac{\delta\kappa_{1}}{\kappa_{1}})E_{1b}+2\Real(\frac{\delta\mu_{1}}{\mu_{1}})E_{1s}+2\Real(\frac{\delta\kappa_{2}}{\kappa_{2}})E_{2b}+2\Real(\frac{\delta\mu_{2}}{\mu_{2}})E_{2s}, \eeq{5.42}
and
\beq \Imag\Gd E=2\Imag(\frac{\delta\kappa_{1}}{\kappa_{1}})E_{1b}+2\Imag(\frac{\delta\mu_{1}}{\mu_{1}})E_{1s}+2\Imag(\frac{\delta\kappa_{2}}{\kappa_{2}})E_{2b}+2\Imag(\frac{\delta\mu_{2}}{\mu_{2}})E_{2s}, \eeq{5.43}
are approximately known.

In the generic case, where there is no degeneracy, the four equations \eq{5.22}, \eq{5.23}, \eq{5.42} and \eq{5.43}, can be solved for $E_{1b}, E_{1s}, E_{2b}$ and $E_{2s}$.
If the point $(E_{1b}, E_{1s})$ lies outside the compatible region for phase 1 or if $(E_{2b}, E_{2s})$ lies outside the compatible region for phase 2, then the body must
have yielded. 

%%%%%%%%%%%%%%%%%%%%%%%%%%%%%%%%%%%%%%%%%%%%%%%%%%%%%%%%%%%%%%%%%%%%%%%%%
\section{$E_\GO$  Inclusions}
%%%%%%%%%%%%%%%%%%%%%%%%%%%%%%%%%%%%%%%%%%%%%%%%%%%%%%%%%%%%%%%%%%%%%%%%%%
\setcounter{equation}{0}

In this section we give a brief overview of the method of Kang, Kim and Milton \cite{Kang:2011:SBV} for finding optimal $E_\GO$  inclusions in two-dimensions. These are
defined as an inclusion of phase $1$ inside the body $\GO$ such that for appropriate boundary conditions the field inside the inclusion is constant. 
These inclusions are of interest to us because for $E_\GO$ inclusions, with the appropriate boundary conditions on the fields, many of the previously discussed bounds are optimal. We assume the inclusion is simply connected
and lying strictly within the simply connected body $\GO$. 
Coordinates are chosen so the $x$-axis is aligned with the field inside the inclusion, and so the projection of the inclusion onto the $y$-axis is the interval $[-1,1]$.
Then the constancy of the field 
is formulated as $V(x,y) = x$ in $E_\GO$.  The potential satisfies the standard conductivity equations
\beq
 \BJ(x,y)=\Gs(x,y) \BE(x,y), \quad \BE(x,y)=-\Grad V(x,y),\quad \Curl [\BR_\perp \BJ(x,y)]=0,
\eeq{6.1}
where $\BR_\perp$ is the rotation matrix \eq{2.18} for a $90^\circ$ degree rotation.  Considering the divergence of $\BJ$ as the curl of the rotated $\BJ$ field allows us to define a potental W such that:
  	 \beq
   	  \BR_\perp \BJ(x,y)=\Grad W.
  	 \eeq{6.5}
So in phase 2, which without loss of generality we assume to have conductivity $\Gs_2=1$, we have that $ \BR_\perp \Grad W=\Grad V$. Equivalently $W$ and $V$ satisfy the Cauchy-Riemann equations and thus $V+iW$ is an analytic function of $x+iy$ in phase 2.  In phase 1, $\BR_\perp \Grad W=\Gs_1\Grad V=\Gs_1\Grad x$, and since the potentials
$V$ and $W$ are continuous across the boundary we have that
\beq
 V=x, \quad  W = \Gs_1 y, \quad {\rm on} ~ 
\Md E_\GO.
\eeq{6.6}
Next define the potential: 
\beq
 v+iw=\frac{i(V+iW-z)}{1-\Gs_1},
\eeq{6.7}
which is an analytic function of $z=x+iy$
in $\GO\setminus E_\GO$ and we see that on $\Md E_\GO$
\beq
 v=y, \quad w=0.
\eeq{6.8}
As is often useful for solving two dimensional free boundary problems involving the Laplace equation, Kang, Kim, and Milton \cite{Kang:2011:SBV}  use a hodograph transform.  To do this,  assume that $v+iw$ is a univalent function of $x+iy$ outside of $E_\GO$ and thus $z=x+iy$ is an analytic function of $h=v+iw$. Then the image of $E_\GO$ 
is a slit on the $v$ axis (where $w=0$) from $v=-1$ to $v=1$.  We want to find functions $x+iy$ of $h=v+iw$ such that $y=v$ on the slit. It is helpful to consider the function $\bar z=\bar x+i\bar y=z-ih=x+iy-iv+w$ which on the slit has $\bar y=y-v=0$.  Now make the fractional linear transformation 
\beq s=\frac{1-h}{1+h}=\frac{1-[v+iw]}{v+iw+1},
\eeq{6.10} which maps $ h=1$ to $s=0$ and $h=-1$ to $s=\infty$.  So in the $s$-plane the slit becomes the positive real axis. Next, the square root transformation is used to map the positive real axis to the entire real axis, namely $t=\sqrt s$ where $\sqrt s$ is chosen with a branch cut on the positive real axis.  Thus $\bar y=0$ on the entire real $t$
axis. This is satisfied by taking $\bar z=f(t)$ where $f(t)$ satisfies $f(t^*)=(f(t))^*$ and $a^*$ is the complex conjugate of $a$.  To satisfy this we could set
\beq f(t)=\sum_{\Ga=1}^n [\frac{ b_\Ga}{t-t_\Ga}+\frac{b_\Ga^*}{t-t_\Ga^*}]+c,
\eeq{6.11}
where the $t_\Ga$ are complex with nonzero imaginary components (to ensure $f(t)$ has no poles on the real axis), the $b_\Ga$ are real or complex, and $c$ is real. 
Tracing back the formulae,
we see that
\beq z=z(h)=ih+f\left(\sqrt{\frac{1-h}{1+h}}\right), \eeq{6.11a}
and since $h=y$ on the slit, the boundary of the $E_\GO$ inclusion is given by the formula
\beq x=f\left(\pm\sqrt{\frac{1-y}{1+y}}\right). \eeq{6.11b}
To avoid self intersections it is required that $f(t) \neq f(-t)$ for all real $t\ne 0$. Additionally, to ensure the univalence of $z(h)$ in the neighbourhood of the slit ends
 $h=-1$ and $h=1$ it is required that the derivative $f'(0)$ is nonzero and $f(t)$ has the asymptotic expansion
\beq f(t)=\Gb_0+\Gb_1/t+\mathcal{O}(|t|^{-2})\quad {\rm as}~|t|\to\infty, \eeq{6.11c}
where $\Gb_1$ is real and positive.

Kang, Kim and Milton \cite{Kang:2011:SBV} gave some numerical examples illustrating $E_\GO$ inclusions. Figure 6 shows a further example of an $E_\GO$ inclusion
and the function which generates it.  By taking functions $f(t)$ of the form \eq{6.11} with $n\leq 5$ and real or complex residues and their conjugates 
one can generate a wide variety of $E_\GO$ inclusion shapes, as shown in Figure 7. 

\begin{figure}
	\centering
	\includegraphics[width=0.5\textwidth]{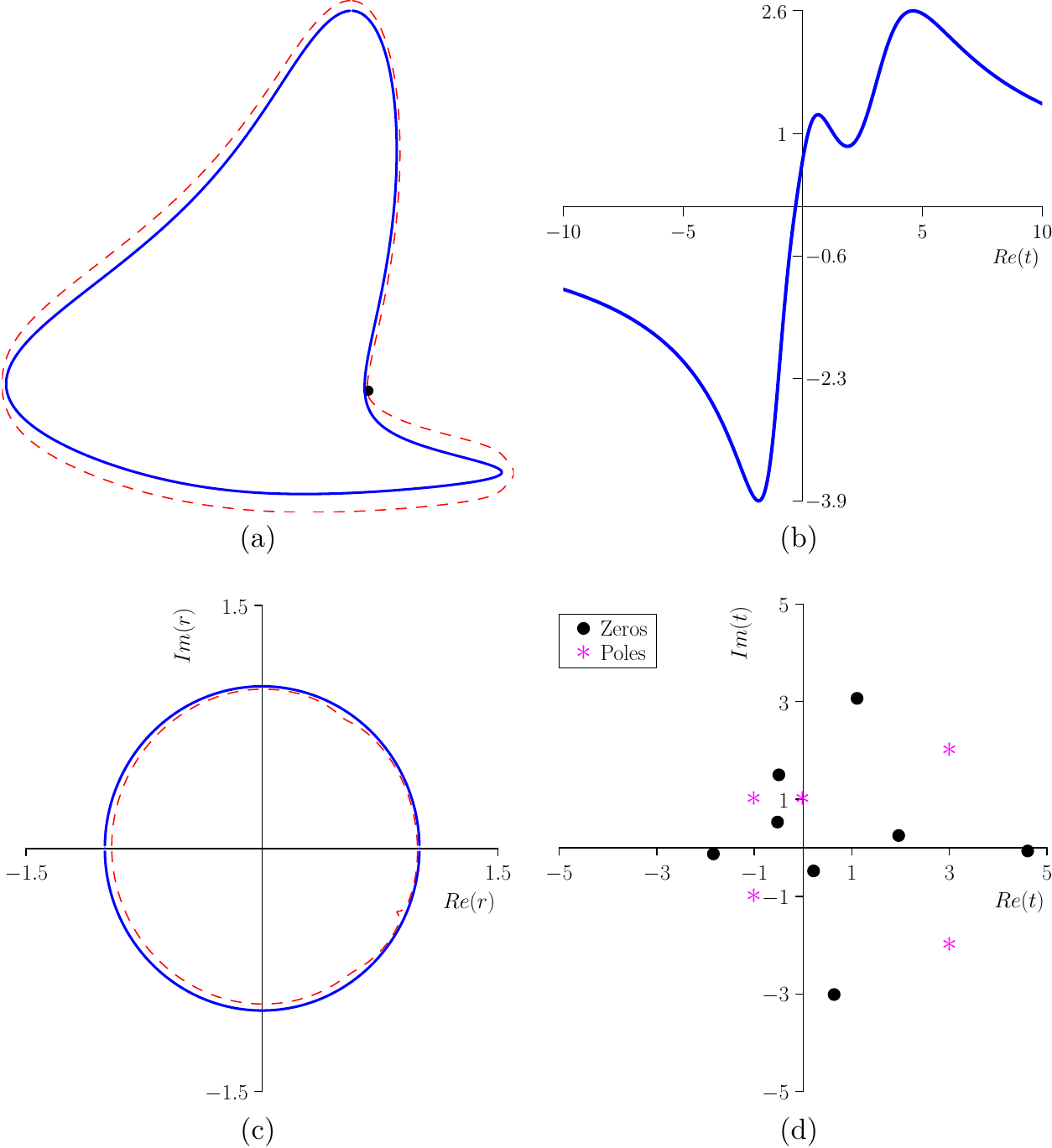}
	\caption{An example of an $E_\GO$ inclusion is shown in (a) given by the blue outline, with a possible boundary of $\GO$ marked by the dashed red line.
Shown in (b) is the function $f(t)$ which generates this inclusion. Shown in (c) are blue and dashed red curves in the $r=(t-i)/(t+i)$ plane the images of which
under the mapping $z(h(t(r)))$ with $h(t)=(1-t^2)/(1+t^2)$ and $t(r)=i(1+r)/(1-r)$ give the blue and dashed red curves in figure (a). Shown in (d) are the poles
and zeros of the function $dz(h(t))/dt$. At these zeros in the upper half $t$-plane the map $z(h(t))$ is not conformal and as a consequence these zeros 
map to points in the $z$-plane where $v+iw$ is not a univalent function of $x+iy$. Such a point is indicated by the small black circle in (a): the boundary
of $\GO$ must pass between it and the boundary of the $E_\GO$ inclusion. }
	\label{fig:Patrick2}
\end{figure}
\begin{figure}
	\centering
	\includegraphics[width=0.5\textwidth]{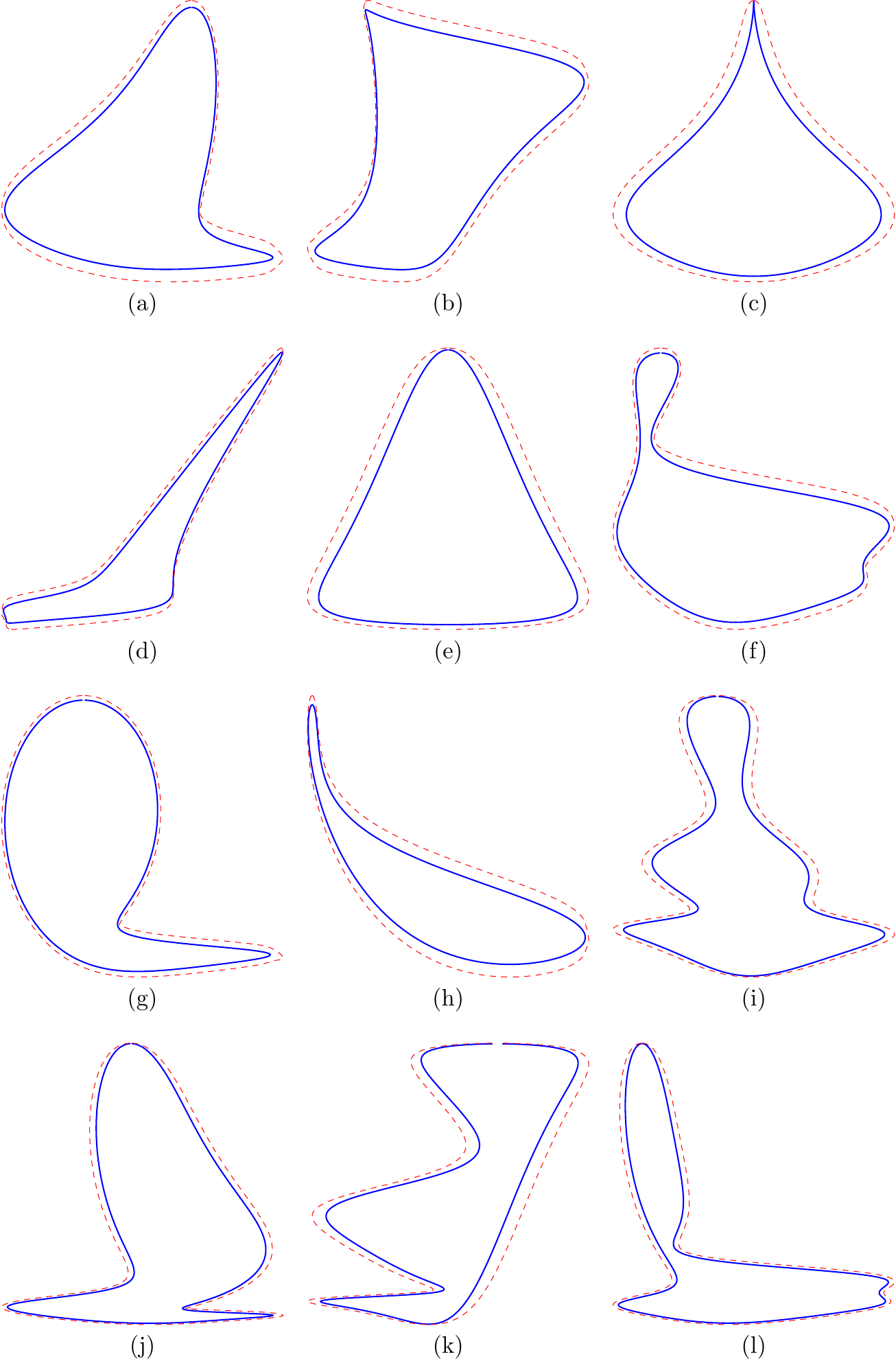}
	\caption{Examples showing the wide variety of $E_\GO$ inclusion shapes that can be generated from the formula \eq{6.11} with $n\leq 5$. Each  $E_\GO$ 
inclusion is outlined in blue, and the surrounding red dashed line represents one of many possible boundaries for $\GO$.}
	\label{fig:Patrick1}
\end{figure}

%%%%%%%%%%%%%%%%%%%%%%%%%%%%%%%%%%%%%%%%%%%%%%%%%%%%%%%%%%%%%%%%%%%%%%%%%
\subsection{Shearing and stretching of $E_\GO$ inclusions}
%%%%%%%%%%%%%%%%%%%%%%%%%%%%%%%%%%%%%%%%%%%%%%%%%%%%%%%%%%%%%%%%%%%%%%%%%%
In two-dimensions periodic microstructures having the property that the field is uniform in phase can be transformed to other geometries having the same property:
see section 23.9 of \cite{Milton:2002:TOC}. In this transformation the shape of each inclusion undergoes an affine transformation,  which is different to the affine transformation that the unit cell of periodicity undergoes. Here we apply a similar analysis to 
show that our $E_\GO$ inclusions remain $E_\GO$ inclusions after appropriate shears and stretches. 
Again suppose that $z=x+iy$ is an analytic function of $h=v+iw$ in the neighborhood of the slit $w=0$, $-1\leq v \leq 1$, and $y=v$ on the slit. Now observe that $z'=x'+iy'=\Gg_1(x+iy)+\Gg_2(v+iw)$  is still an analytic function of $v+iw$ for all choices of $\Gg_1$ and $\Gg_2$. If we choose $\Gg_1$ to be real then $y'$ will not depend on $x$ and will be
proportional to $y$ along the slit.  Along the slit $w=0$, and we have   
\beq
 y'=\Gg_1y+\Imag(\Gg_2)v=(\Gg_1+\Imag(\Gg_2))y ,\quad x'=\Gg_1x+\Real(\Gg_2)v=\Gg_1x+\Real(\Gg_2)y.
\eeq{6.12}
So if we choose $\Imag(\Gg_2)=1-\Gg_1$ we ensure that $y'=y=v$ along the slit. In other words the function $z'(h)$ satisfies the same desired properties as $z(h)$, and 
associated with it there is an inclusion having a constant field inside, with boundary
\beq
x'=\Gg_1x+\Real(\Gg_2)y,  \quad y'=y\quad {\rm where} ~ (x,y)\in \Md E_\GO.
\eeq{6.13}
When $\Gg_1=1$ and $\Real(\Gg_2)\ne 0$ this corresponds to a shear of the inclusion, and when $\Real(\Gg_2)=0$ and $\Gg_1\ne 1$ it corresponds to a stretch of the inclusion in the $x$-direction: more generally it is a
combination of the two transformations. 

%%%%%%%%%%%%%%%%%%%%%%%%%%%%%%%%%%%%%%%%%%%%%%%%%%%%%%%%%%%%%%%%%%%%%%%%%
\subsection{An additional field supported by the $E_\GO$ inclusion}
%%%%%%%%%%%%%%%%%%%%%%%%%%%%%%%%%%%%%%%%%%%%%%%%%%%%%%%%%%%%%%%%%%%%%%%%%%
As shown in \cite{Kang:2011:SBV} the $E_\GO$ inclusion can also support a field which is constant and aligned with the $y$-axis. To review this, let $V'$ and $W'$ be the associated potentials, with
fields $\BE(x,y)=-\Grad V'$  and $\BJ(x,y)=-\BR_\perp \Grad W'$. If inside the inclusion $V'=y$, then $\BR_\perp \Grad W'=\Gs_1\Grad V'=\Gs_1\Grad y$ implying
$W'=-\Gs_1x$. Since these potentials are continuous across the inclusion boundary we have that $V'=y, W'=-\Gs_1y$ on $\partial E_\GO$. Outside the inclusion
(assuming $\Gs_2=1$) $V'+iW'$ must be an analytic function of $z=x+iy$. 
We look for a solution with 
\beq
V'+iW'=\Ga(V+iW)+\Gb z,
\eeq{6.15} 
where $\Ga$ and $\Gb$ are complex constants. Using the boundary values of $V$ and $W$ we have
\beq
V'+iW'=\Ga(x+i\Gs_1y)+\Gb(x+iy) \quad {\rm on} ~ \Md E_\GO.
\eeq{6.15a}
The complex constants $\Ga$ and $\Gb$ are chosen so  $V'$ and $W'$ satisfy the boundary conditions which gives
\beqa
V'& = & \Real(\Ga)x-\Imag(\Ga)\Gs_1y+\Real(\Gb)x-\Imag(\Gb)y=y, \nonum
W'& = &\Imag(\Ga)x+\Real(\Ga)\Gs_1y+\Imag(\Gb)x+\Real(\Gb)y=-\Gs_1x.
\eeqa{6.16}
These are satisfied if $\Ga$ and $\Gb$ take the purely imaginary values 
\beq \Ga=i,\quad \Gb=-i(1+\Gs_1).
\eeq{6.17}
Thus the inclusion can also support a constant field in this orthogonal direction, and by superposition in any direction.  
%%%%%%%%%%%%%%%%%%%%%%%%%%%%%%%%%%%%%%%%%%%%%%%%%%%%%%%%%%%%%%%%%%%%%%%%%
\subsection{Elastic $E_\GO$  Inclusions}
%%%%%%%%%%%%%%%%%%%%%%%%%%%%%%%%%%%%%%%%%%%%%%%%%%%%%%%%%%%%%%%%%%%%%%%%%%
It was recognized that isotropic composites of two isotropic phases which achieve the Hashin-Shtrikman bounds on the effective conductivity
also necessarily achieve the Hashin-Shtrikman bounds on the effective bulk modulus \cite{Berryman:1988:MRC, Milton:1984:CEE}
(see also \cite{Gibiansky:1996:CBC} and references therein). The condition that these
bounds be achieved is that the field is uniform in one phase: thus uniformity of the electric and current fields in a phase, implies uniformity 
of the stress and strain fields within that phase, and vice-versa. A deeper reason for this connection was found by \cite{Grabovsky:1996:BEM}, who
discovered that in these geometries, fields solving the conductivity equations can be mapped to fields solving the elasticity equations and vice-versa. 
One would expect a similar mapping to hold for $E_\GO$ inclusions and we will now directly see this is the case. 

We will now use the potentials $V$, $W$, $V'$ and $W'$ to construct stress and strain fields which solve the elasticity equations, with the fields being uniform and hydrostatic in the $E_\GO$ inclusion and with the materials being isotropic in both phases.
Consider
\beq
\BGve= \begin{bmatrix}
{\partial V}/{\partial x} & {\partial V'}/{\partial x} \\
{\partial V}/{\partial y} & {\partial V'}/{\partial y}
\end{bmatrix},
\eeq{6.18}
which we will interpret as a strain field, associated with the displacement $\Bu=(V,V')$. In the $E_\GO$ inclusion,
since $V=x$ and $V'=y$, we have $\BGve=\BI$. 
Let us establish that outside the inclusion $\BGve$ is symmetric and $\Tr(\BGve)$ is constant.  From \eq{6.15a} and \eq{6.17} we see that  
\beq
V'=-W+(1+\Gs_1)y, \quad W'=V-(1+\Gs_1)x,
\eeq{6.18a}
which implies
\beqa
\frac{\Md V'}{\Md x}& =& -\frac{\Md W}{\Md x}= \frac{\Md V}{\Md y}, \nonum
\frac{\Md V'}{\Md y}& = & -\frac{\Md W}{\Md y}+1+\Gs_1=-\frac{\Md V}{\Md x}+1+\Gs_1,
\eeqa{6.18b}
where we have used the fact that $V$ and $W$ satisfy the Cauchy Riemann equations.
Thus $\BGve$ is symmetric, which implies $\BGve$ is the symmetrized gradient of the displacement $\Bu=(V,V')$, and $\Tr(\BGve)$ is constant.
To construct solutions to the elasticity problem we want to choose $\Gs_1$ such that $\BGve$ satisfies the elasticity equations,  
\beq
\BGj=\Gl(x,y)(\Tr(\BGve))I+2\Gm(x,y)\BGve, \quad \Div \BGj = 0.
\eeq{6.21}
Then the stress in the inclusion
\beq \BGj=2(\Gl_1+\Gm_1)\BI \eeq{6.21a}
is clearly divergence free, and the stress in the matrix
\beq \BGj=\Gl_2(1+\Gs_1)\BI+2\Gm_2\BGve \eeq{6.21b}
is also divergence free because $V$ and $V'$ are harmonic functions. 
We also require the tractions to be continuous across $\Md E_\GO$. As $V$ and $V'$ both solve the conductivity equations it follows that
$$ \Gs \Bn^T\begin{bmatrix} {\Md V}/{\Md x}\cr {\Md V}/{\Md y}\end{bmatrix},\quad
\Gs \Bn^T\begin{bmatrix} {\Md V'}/{\Md x}\cr {\Md V}/{\Md y}\end{bmatrix} $$
are both continuous across $\Md E_\GO$, where $\Bn$ is the outward normal to the interface. Recalling that $\Gs_2=1$, this implies 
\beqa
\Bn^T\begin{bmatrix} {\Md V}/{\Md x}\\ {\Md V}/{\Md y}\end{bmatrix} = \Gs_1 \Bn^T\begin{bmatrix} 1\\ 0 \end{bmatrix}, \\
\Bn^T\begin{bmatrix} {\Md V'}/{\Md x}\\ {\Md V'}/{\Md y}\end{bmatrix} = \Gs_1 \Bn^T\begin{bmatrix} 0\\ 1 \end{bmatrix},
\eeqa {6.22}
where on the left $V$ and $V'$ are the potentials just outside the $E_\GO$ inclusion. Hence we deduce that
\beq \Bn^T\BGve=\Gs_1\Bn^T \eeq{6.23}
where on the left $\BGve$ is the field  just outside the $E_\GO$ inclusion.
On the other hand, from \eq{6.21a} and \eq{6.21b}, the continuity of $\Bn^T\BGj$ (which is equivalent to continuity of the
traction $\BGj\Bn$) requires that  
\beq
2(\Gl_1+\Gm_1)\Bn^T=\Gl_2(1+\Gs_1)\Bn^T+2\Gm_2\Bn^T\BGve=[\Gl_2+\Gs_1(\Gl_2+2\Gm_2)]\Bn^T,
\eeq{6.23a}
where we have used \eq{6.23} to eliminate $\BGve$, the field  just outside the $E_\GO$ inclusion.
So the traction is continuous, and the elasticity equations are satisfied, if we choose
\beq
\Gs_1=\frac{2(\Gl_1+\Gm_1)-\Gl_2}{\Gl_2+2\Gm_2}.
\eeq{6.26}
Thus there is a close connection between the $E_\GO$ inclusions for the conductivity and elasticity cases. We have not explored the question as to the shapes of
elastic $E_\GO$ inclusion for which the stress field inside the inclusion is constant, but not hydrostatic. Are such inclusions necessarily elliptical in shape
(or ellipsoidal in three dimensions)? 

%%%%%%%%%%%%%%%%%%%%%%%%%%%%%%%%%%%%%%%%%%%%%%%%%%%%%%%%%%%%%%%%%%%%%%%%%
\section*{Acknowledgements}
G.W. Milton thanks the National Science Foundation for support through grant DMS-1211359, and all  authors thank the University of Utah for helping support this research, through the Math 4800 undergraduate class. Andrew Boyles is thanked
for his participation in the work and for helping with the section on two-dimensional elasticity.

\end{document}